\author{C\.{i}han Bahran}
\affil{School of Mathematics\\
University of Minnesota\\
Minneapolis, MN 55455, USA}
\title{Categorifying induction formulae via divergent series}
\newcommand{\scr}{\mathscr}
\newcommand{\bs}{\backslash}
\newcommand{\one}{\mathbbm{1}}
\newcommand {\malg}[1]{\bb{M}_{#1}(\qq)}
\newcommand {\molg}[2]{\bb{M}_{#1}(#2)}
\newcommand {\ilg}[1]{e{\bb{M}_{#1}(\qq)}e}
\newcommand {\bilg}[2]{e{\bb{M}_{#1}(#2)}e}
\newcommand {\ce} {\mathsf{C}}
\newcommand {\de} {\mathsf{D}}
\newcommand {\sce} {\scr{C}}
\newcommand {\mob}{M{\"o}bius\,\,}
\newcommand {\surj} {\twoheadrightarrow}
\newcommand{\lset}[1]{#1\mhyphen\mathsf{set}}
\newcommand{\Lef}{\Lambda}
\newcommand{\LefS}{\Lambda_{\Sigma}}
\newcommand{\rlefs}{\wt{\Lambda}_{\Sigma}}
\newcommand{\eus}{\chi_{\Sigma}}
\newcommand{\reus}{\wt{\chi}_{\Sigma}}
\newcommand{\reu}{\wt{\chi}}
\newcommand{\lefm}{\LL}
\newcommand{\compl}[1]{{#1}_{p}^{\wedge}}
\newcommand{\complet}[1]{{\left( #1 \right)}_{p}^{\wedge}}
\newcommand{\gro}[2]{\int_{#2} \! #1}
\newlength{\bibitemsep}\setlength{\bibitemsep}{.3\baselineskip plus .05\baselineskip minus .05\baselineskip}
\newlength{\bibparskip}\setlength{\bibparskip}{0pt}
\let\oldthebibliography\thebibliography
\renewcommand\thebibliography[1]{%
  \oldthebibliography{#1}%
  \setlength{\parskip}{\bibitemsep}%
  \setlength{\itemsep}{\bibparskip}%
}
\DeclareMathOperator{\res}{res}
\DeclareMathOperator{\ind}{ind}
\DeclareMathOperator{\co}{H}
\DeclareMathOperator{\cfy}{B\!}
\newcommand{\prim}[1]{\mathcal{P}({#1})}
\newcommand{\all}[1]{\mathcal{S}({#1})}
\def\expandafter\normalsize\expandafter{%
    \normalsize
    \setlength\abovedisplayskip{8pt}
    \setlength\belowdisplayskip{8pt}
    \setlength\abovedisplayshortskip{5pt}
    \setlength\belowdisplayshortskip{5pt}
}
\def\blfootnote{\gdef\@thefnmark{}\@footnotetext}
\begin{document}

\date{}
\maketitle
\blfootnote{\textup{2010} \textit{Mathematics Subject Classification}.
Primary 19A22; Secondary 20C05, 20J06, 55R35.} 
\blfootnote{\textit{Key words and phrases}. Burnside ring, induction theory, stable splittings.}

\begin{onecolabstract}
 We show how to get explicit induction formulae for finite group representations, and more generally for rational Green functors, by summing a divergent series over Dwyer's subgroup and centralizer decomposition spaces. This results in formulae with rational coefficients. The former space yields a well-known induction formula, the latter yields a new one. As essentially immediate corollaries of the existing literature, we get similar formulae in group cohomology and stable splittings of classifying spaces.
\end{onecolabstract}

{\tableofcontents}

\section{Introduction} \label{intro}
Let $G$ be a fixed finite group throughout. An \textbf{induction formula} for a complex character $\chi$ of $G$ is the existence of 
\begin{birki}
 \item some characters $\eta_{H}$ for various (preferably proper!) subgroups $H$ of $G$,
 \item some scalars $\lambda_{H} \in \qq$, and
 \item an equation with induced characters of the form $\chi = \sum_{H} \lambda_{H} \ind_{H}^{G}(\eta_{H})$.
\end{birki}
A classical result of Artin \cite[page 293]{artin} (or see Benson \cite[Theorem 5.6.1]{benson}) says that such a formula always exists where $H$ ranges over the \textbf{cyclic subgroups} of $G$. This then allows Artin to reduce certain arguments from a general finite group to a cyclic one. Brauer later gave an explicit version for Artin's induction theorem:

\begin{thm}[{\cite[Satz 1]{brauer}}\footnote{This theorem is \textbf{not} what is usually meant by ``Brauer's induction theorem'' \cite[Theorem 5.6.4]{benson}, which has integral coefficients.}]  \label{brauer-explicit}
 Writing $\sce$ for the set of \textbf{cyclic subgroups} of $G$ and $\one_{H}$ for the trivial character of $H$, we have
\begin{align*}
 \one_{G} = \sum_{H \in \sce} 
 \frac{-\mu_{\sce_{+}}(H,\infty)}{|G:H|} \ind_{H}^{G}(\one_{H}) \, .
\end{align*}
Here, the poset $\sce_{+}$ is given by adding a unique maximum element $\infty$ to $\sce$, and $\mu_{\sce_{+}}$ is its \mob function.
\end{thm}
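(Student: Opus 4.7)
The plan is to evaluate both sides at an arbitrary element $g \in G$ and then use M\"obius inversion on $\sce_+$ to finish.

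First, I would recall that for any subgroup $H \leq G$, the induced permutation character satisfies
\begin{align*}
\ind_{H}^{G}(\one_{H})(g) = \frac{1}{|H|} \, |\{x \in G : x^{-1} g x \in H\}| \, ,
\end{align*}
and for a \emph{cyclic} subgroup $H$, the condition $x^{-1} g x \in H$ is equivalent to $x^{-1} \langle g \rangle x \leq H$. Multiplying the proposed identity through by $|G|$ and evaluating at $g$, it therefore suffices to prove
\begin{align*}
|G| = \sum_{H \in \sce} \bigl(-\mu_{\sce_{+}}(H,\infty)\bigr) \, |\{x \in G : x^{-1} \langle g \rangle x \leq H\}| \, .
\end{align*}

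Next, I would swap the order of summation, rewriting the right-hand side as
\begin{align*}
\sum_{x \in G} \sum_{\substack{H \in \sce \\ H \geq x^{-1} \langle g \rangle x}} \bigl(-\mu_{\sce_{+}}(H,\infty)\bigr) \, .
\end{align*}
For each $x$, the cyclic subgroup $K := x^{-1}\langle g \rangle x$ lies in $\sce$, so the inner sum is an interval sum in $\sce_{+}$ missing only the top element $\infty$. The defining identity of the M\"obius function gives $\sum_{H \in \sce_{+},\, H \geq K} \mu_{\sce_{+}}(H,\infty) = 0$ (since $K \neq \infty$), and separating off the term $\mu_{\sce_{+}}(\infty,\infty) = 1$ shows that the inner sum equals $1$ for every $x \in G$. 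Summing over $x$ yields $|G|$, as desired.

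The argument is essentially a bookkeeping exercise; the only conceptual point is recognizing that the combinatorial identity being used is exactly the defining relation for $\mu_{\sce_{+}}$. There is no real obstacle, but one does need to check that the condition ``$x^{-1}gx \in H$'' genuinely factors through the cyclic subgroup $x^{-1}\langle g \rangle x$, which is where cyclicity (rather than some weaker closure property) of the indexing family is used; if one replaced $\sce$ by a family not closed under taking the cyclic subgroup generated by an element, the bijection between conjugates of $g$ landing in $H$ and conjugates of $\langle g \rangle$ contained in $H$ would fail.
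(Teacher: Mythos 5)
Your proof is correct, and it is essentially the classical character-theoretic argument, which is genuinely different from the route the paper takes. The paper does not give a self-contained proof of Theorem~\ref{brauer-explicit} at this point; it cites Brauer and later recovers the statement (in the stronger form of Theorem~\ref{webb-formula}) by linearizing the series Lefschetz invariant of the subgroup decomposition category $E\oo_{\sce}$ (Theorem~\ref{lef-intro-EOC} $\Rightarrow$ Theorem~\ref{EOC-induction} $\Rightarrow$ Theorem~\ref{webb-formula}), with the identification of the primordial subgroups as the cyclic ones supplied by Artin/Dress (Theorem~\ref{dress-prim}). Your argument instead evaluates both sides at an arbitrary $g \in G$, converts the induced trivial character into the count $\frac{1}{|H|}\,|\{x : x^{-1}gx \in H\}|$, uses cyclicity of $H$ to rephrase this as counting conjugates of $\langle g \rangle$ contained in $H$, swaps the order of summation, and then applies the defining recursion of $\mu_{\sce_{+}}$ over the interval $[x^{-1}\langle g\rangle x, \infty]$ to collapse each inner sum to $1$. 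Every step checks out, including the key use of the fact that $\sce$ contains $x^{-1}\langle g\rangle x$ for each $x$, which is where cyclicity and closure under conjugation enter. What your approach buys is a short, entirely elementary verification that does not invoke Artin's (nonconstructive) induction theorem as a black box; what it gives up is the generality the paper is after, since it is tied to pointwise evaluation of complex characters and does not transport to arbitrary rational Green functors or other subgroup families the way the categorical proof does.
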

\begin{rem}
  Although Theorem \ref{brauer-explicit} is on the surface only an induction formula for the trivial character, a similar formula for an arbitrary character $\chi$ can be obtained immediately by multiplying both sides of the equality with $\chi$ and using Frobenius reciprocity.
\end{rem}
Of course, being over cyclic groups, the \mob coefficients in Theorem \ref{brauer-explicit} can be expressed in terms of the number-theoretic \mob function, but it is the formula we present that generalizes. The generalization of Artin's induction theorem to other rings was obtained by Dress \cite[Theorem 1', Theorem 2]{dress-on-integral}, succeeding Conlon \cite[Corollary 4.6]{conlon} who treated the local case. Later, Webb \cite[Theorem D']{webb-local} found a way to make these existence theorems explicit (as Brauer did for Artin) and obtained a formula which has exactly the \textbf{same coefficients} as Brauer's formula but with a \textbf{larger set $\sce$ of subgroups}, whose size depends, not surprisingly, on how many primes divide the order $|G|$ and remain non-invertible in $R$.

In this paper, we  give a meaning to the right hand side of Brauer's (and Webb's) formula as an entity of its own, for \textbf{any} set $\sce$ of subgroups of $G$ which is closed under conjugation. We emphasize that the coefficients in this formula are usually not integers. To that end, we write $\Omega_{\zz}(G)$ for the \textbf{integral Burnside ring} of $G$, and extend the scalars $\Omega(G) := \qq \otimes_{\zz} \Omega_{\zz}(G)$ to allow rational coefficients. We refer the reader to Benson's book \cite[Section 5.4]{benson} for an introduction to the Burnside ring. We write $[G/H]$ for the equivalence class of the transitive left $G$-set $G/H$ as an element of $\Omega(G)$.

We shall define, as an equivariant generalization of Berger--Leinster's \cite{berger} notion of \textbf{series Euler characteristic} $\eus$, a partial assignment 
\begin{align*}
 \LefS \colon \{\text{finite categories with a $G$-action}\} \dashrightarrow \Omega(G)\, .
\end{align*}
 The subscript $\Sigma$ in $\eus$ and $\LefS$ is there to indicate that a divergent series is involved in the definition, coming from the fact that the nerve of most finite categories have cells in arbitrarily high dimensions, due to loops. If $\LefS(\de)$ is defined for a $G$-category $\de$, we call it the \textbf{series Lefschetz invariant} of $\de$, to keep consistent notation with Th{\'e}venaz \cite{thev} (he defines $\Lambda(\mathsf{P})$ for a finite $G$-poset $\mathsf{P}$ and calls it the Lefschetz invariant) and other papers that build on his work. 
 
 Given any set $\sce$ of subgroups of $G$, Dwyer \cite{dw1,dw} introduced a $G$-category $E \oo_{\sce}$ (to be defined in Section \ref{sub-cen}) for obtaining so-called \textbf{subgroup decompositions} in group (co)homology.  We call $E \oo_{\sce}$ the \textbf{subgroup decomposition category} of $\sce$. For the reader familiar with Dwyer's work, we are using Grodal's notation \cite{grodal-higher, grodal-smith} for this category here instead of Dwyer's $\mathbf{X}_{\sce}^{\beta}$. We compute its series Lefschetz invariant:
\begin{thmx} \label{lef-intro-EOC}
 Let $\sce$ be any set of subgroups of $G$ closed under conjugation. The $G$-category $E \oo_{\sce}$ has series Lefschetz invariant 
\begin{align*}
 \LefS(E \oo_{\sce}) = \sum_{H \in \sce} \frac{-\mu_{\sce_{+}}(H,\infty)}{|G:H|} [G/H]  \in \Omega(G) \, .
\end{align*}
Here, the poset $\sce_{+}$ is given by adding a unique maximum element $\infty$ to $\sce$, and $\mu_{\sce_{+}}$ is its \mob function.
\end{thmx}

To state Webb's result (it generalizes Brauer's) precisely, let us introduce some notation. Given a commutative ring $R$, we write $A_{R}(G)$ for the \textbf{rational representation ring} of $G$ over $R$ (see Section \ref{conseq} for a definition). For any group $H$ and a prime $p$, we write $O_{p}(H)$ for the \textbf{largest} normal $p$-subgroup of $H$. 

\begin{thm} [{\cite[Theorem D']{webb-local}} for $R = \compl{\zz}$] \label{webb-formula}
 Let $R$ be a unital commutative ring. Suppose $\sce$ is a set of subgroups of $G$ closed under conjugation which satisfies the following: 
\begin{birki}
 \item Every cyclic subgroup is in $\sce$.
 \item If $H$ is a subgroup such that $H/O_{p}(H)$ is cyclic for some prime $p$ with $pR \neq R$, then $H \in \sce$.\footnote{Note that the existence of a  prime $p$ with $pR \neq R$ renders condition (1) superfluous.}
\end{birki}
Then the trivial representation $R$ can be written as
\begin{align*}
R = \sum_{H \in \sce} 
 \frac{-\mu_{\sce_{+}}(H,\infty)}{|G:H|} \ind_{H}^{G}(R) \in A_{R}(G) \, .
\end{align*}
Here, the poset $\sce_{+}$ is given by adding a unique maximum element $\infty$ to $\sce$, and $\mu_{\sce_{+}}$ is its \mob function.
\end{thm}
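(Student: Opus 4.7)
The plan is to deduce Theorem \ref{webb-formula} from Theorem \ref{lef-intro-EOC} by change of coefficients. There is a natural ring homomorphism $\rho \colon \Omega(G) \to A_R(G)$ determined by $[G/H] \mapsto \ind_H^G(R)$, which assigns to a finite $G$-set its $R$-linear permutation module. Applying $\rho$ to the identity in Theorem \ref{lef-intro-EOC} immediately gives
\begin{align*}
\rho\bigl(\LefS(E \oo_{\sce})\bigr) = \sum_{H \in \sce} \frac{-\mu_{\sce_{+}}(H,\infty)}{|G:H|} \ind_{H}^{G}(R) \in A_R(G) \, .
\end{align*}
So Webb's formula reduces to showing that $\rho(\LefS(E \oo_{\sce})) = R$, i.e.\ the class of the trivial representation, under hypotheses (1)--(2) on $\sce$.

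For that identification I would invoke the very reason Dwyer introduced $E \oo_{\sce}$: his subgroup decomposition theorem. Hypotheses (1)--(2) are designed exactly so that $\sce$ is rich enough to force $|E \oo_{\sce}|$ to be equivariantly $R$-cohomologically indistinguishable from a point, so that the Borel construction on $|E \oo_{\sce}|$ has the $R$-cohomology of $BG$. Combined with the divergent-series regularization that defines $\LefS$, this acyclicity translates (via a fixed-point / Bredon-theoretic argument) into the statement that the $A_R(G)$-valued series Lefschetz invariant of $E \oo_{\sce}$ is just the class of the trivial representation $R$, which is exactly what we need.

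The main obstacle is verifying the compatibility between the ring homomorphism $\rho$ and the divergent-series operation defining $\LefS$: the image $\rho(\LefS(E \oo_{\sce}))$ must coincide with the \emph{series Lefschetz invariant of $E \oo_{\sce}$ computed in the rational Green functor $A_R$}, so that Dwyer's (co)homological input can actually be fed into the computation. Granting this naturality statement, the proof becomes an appeal to Dwyer's decomposition theorem (or a close variant, in the form used by Webb or Grodal) to certify the required $R$-acyclicity of $E \oo_{\sce}$ under hypotheses (1)--(2).
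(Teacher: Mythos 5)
Your first step is fine and coincides with what the paper does: the map $\rho$ you describe is exactly the canonical morphism $f_G \colon \Omega(G) \to A_R(G)$ of Proposition \ref{initial}, and applying it to the $G$-set expansion of $\LefS(E\oo_{\sce})$ from Theorem \ref{leflef} produces the right-hand side of Webb's formula. The gap is in your second step, where you want to identify $\rho(\LefS(E\oo_\sce))$ with $R$ by appealing to Dwyer's subgroup decomposition theorem, i.e.\ by an $R$-acyclicity argument for the nerve of $E\oo_\sce$. This is precisely the route the paper flags as problematic: in the Question following the proof of Theorem \ref{new-coho} it is asked whether Theorem \ref{webb-formula} can be proved ``by working directly with the chain complexes of $RG$-modules associated to $E\oo_{\sce}$,'' and the answer is that no chain-level reason is known for the divergent alternating sum of modules in an unbounded complex to vanish. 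The chain complex
\begin{align*}
\cdots \to R \to R \to R \to R \to 0
\end{align*}
with maps alternating between the identity and zero is acyclic, yet its regularized alternating sum is $\frac{1}{2}R$, not zero. So ``$E\oo_\sce$ is $R$-acyclic'' does not translate, by any fixed-point or Bredon argument you gesture at, into $\rho(\LefS(E\oo_\sce)) = R$; the divergent-series regularization simply does not see acyclicity.

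The paper avoids this by staying entirely inside the Burnside ring: Theorem \ref{leflef} also gives the expansion of $\rlefs(E\oo_\sce)$ in the primitive idempotents $\eps_K$, and this expansion is supported only on $K \notin \sce$. Proposition \ref{idemp-bye} then says that $f_G(\eps_K) = 0$ whenever $K$ is not primordial for $A$. Combining the two, the reduced invariant dies under $f_G$ as soon as $\sce \supseteq \prim{A}$, which yields Theorem \ref{EOC-induction} for arbitrary $\qq$-Green functors. Theorem \ref{webb-formula} is then the specialization $A = A_R$, where the primordial subgroups are identified by Dress's Theorem \ref{dress-prim} with exactly the subgroups in your hypotheses (1)--(2). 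In short, the paper's input is Dress's algebraic characterization of $\prim{A_R}$ together with the Burnside-ring idempotent algebra, not Dwyer's topological decomposition theorem; your proposal substitutes the latter, and that substitution does not go through.
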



In more elementary terms, Theorem \ref{webb-formula} will yield a formula such as $U = \frac{1}{2}V - \frac{1}{2}W$, where $U,V,W$ are certain $RG$-modules. This means that $U \oplus U \oplus W$ is \textbf{stably isomorphic} with $V$, that is, there exists another $RG$-module $N$ such that $U \oplus U \oplus W \oplus N \cong V \oplus N$ as $RG$-modules. Of course the extra $N$ will be unnecessary if finitely generated $RG$-modules have a cancellative property such as being Krull--Schmidt.

We will show that Webb's (hence also Brauer's) explicit formula can be deduced by linearizing Theorem \ref{lef-intro-EOC}. In this sense the coefficients involved in the formula ``come from'' the category $E \oo_{\sce}$, which may be regarded as an instance of categorification.

In his work, Dwyer \cite{dw1,dw} defined another $G$-category $E \A_{\sce}$, this time for obtaining so-called \textbf{centralizer decompositions} in group (co)homology. Thus we call $E \A_{\sce}$ the \textbf{centralizer decomposition category} of $\sce$. As with the subgroup decomposition case, with the centralizer decomposition category we are using Grodal's notation \cite{grodal-higher, grodal-smith} instead of Dwyer's $\mathbf{X}_{\sce}^{\alpha}$. We compute its Lefschetz invariant, which aptly involves centralizer subgroups.

\begin{thmx} \label{lef-intro-EAC}
 Let $\sce$ be any set of subgroups of $G$ closed under conjugation. The $G$-category $E \A_{\sce}$ satisfies 
\begin{align*}
 \LefS(E \A_{\sce}) = \sum_{H \in \sce} \frac{-\mu_{\sce_{-}}(-\infty,H)}{|G:C_{G}(H)|} [G/C_{G}(H)]  \in \Omega(G) \, .
\end{align*}
Here, the poset $\sce_{-}$ is given by adding a unique minimum element $-\infty$ to $\sce$, and $\mu_{\sce_{-}}$ is its \mob function.
\end{thmx}

As an application, the expansion for $E \A_{\sce}$ in Theorem \ref{lef-intro-EAC}  linearizes into an induction formula which appears to be new:

\begin{thmx} \label{new-induction}
 Let $R$ be a unital commutative ring. Suppose $\sce$ is a set of subgroups of $G$ closed under conjugation which satisfies the following: 
\begin{birki}
 \item If $K$ is a cyclic subgroup, the centralizer $C_{G}(K)$ is in  $\sce$.
 \item If $K$ is a subgroup such that $K/O_{p}(K)$ is cyclic for some prime $p$ with $pR \neq R$, then the centralizer $C_{G}(K)$ is in $\sce$.
\end{birki}
Then the trivial representation $R$ can be written as
\begin{align*}
R = \sum_{H \in \sce} 
 \frac{-\mu_{\sce_{-}}(-\infty,H)}{|G:C_{G}(H)|} \ind_{C_{G}(H)}^{G}(R) \in A_{R}(G) \, .
\end{align*}
Here, the poset $\sce_{-}$ is given by adding a unique minimum element $-\infty$ to $\sce$, and $\mu_{\sce_{-}}$ is its \mob function.
\end{thmx}

An induction formula for group (co)homology immediately follows from Theorem \ref{new-induction} by applying an $\Ext$ or $\Tor$, similar to {\cite[Theorem D]{webb-local}. Here $A_{R}(1)$ is simply the Grothendieck group of finitely generated $R$-modules under direct sum, extended to $\qq$-coefficients.
 
\begin{thmbis}{new-induction} \label{new-coho}
 Let $\sce$ and $R$ be as in the hypotheses of Theorem \ref{new-induction}. Fix a cohomological degree $k \geq 0$, and a finitely generated $RG$-module $M$. We have 
\begin{align*}
 \co^{k}(G;M) = \sum_{H \in \sce} 
 \frac{-\mu_{\sce_{-}}(-\infty,H)}{|G:C_{G}(H)|} \co^{k}(C_{G}(H); M) \in A_{R}(1) \, .
\end{align*}
A similar statement holds for homology $\co_{k}(G;M)$ and Tate cohomology $\hat{\co}^{k}(G;M)$.
\end{thmbis}

Webb's formula can be made even more general, where the representation ring  is replaced by an arbitrary \textbf{rational Green functor}, see Section \ref{green-section} and more specifically Theorem \ref{EOC-induction}. There is an analog of Theorem \ref{new-induction} in the Green functor generality as well: Theorem \ref{EAC-induction}.

On the topology side, Minami showed that \cite[Theorem 6.6]{minami-splitting} Webb's formulae in cohomology can be lifted to suspension spectra of classifying spaces after $p$-completing. Minami's general setup allows us to deduce a similar lift with the centralizer decomposition:

\begin{thmbis}{new-coho} \label{bg-split-cent}
 Let $p$ be a fixed prime. Suppose $\sce$ is a set of subgroups of $G$ closed under conjugation, such that the centralizer $C_{G}(K)$ is in $\sce$ whenever $K/O_{p}(K)$ is cyclic. Then, writing $\compl{X}$ for the $p$-completion of a space $X$, there is a formal stable equivalence 
\begin{align*}
 \compl{\cfy G} \simeq \bigvee_{H \in \sce} \frac{-\mu_{\sce_{-}}(-\infty,H)}{|G:C_{G}(H)|} \compl{\cfy C_{G}(H)} \, .
\end{align*}
of spectra, with respect to the wedge sum $\vee$.
\end{thmbis}

With the words \emph{formal stable equivalence} above, we mean that after clearing the denominators and transferring the negative terms to the left, the  genuine spaces on both sides have homotopy equivalent suspension spectra.

\subsection{Outline}
Below is a graph of logical dependencies among the main theorems of this paper. To highlight the analogies, we include some of the previously known results like Webb's formulae in this graph, distinguishing the results of this paper by \textbf{bold} font.\\

{\small
\begin{tikzcd}[column sep = 2.3ex, row sep = 2.3ex]
 & \textbf{Theorem \ref{lef-intro-EOC}} 
 \arrow[Rightarrow]{r}
 & \text{Theorem \ref{EOC-induction}}
 \arrow[Rightarrow]{r}
 & \text{Theorem \ref{webb-formula}}
\\
 \textbf{Theorem \ref{compute}} 
 \arrow[Rightarrow]{r}
 & \textbf{Theorem \ref{leflef}}
 \arrow[Rightarrow]{u}
 \arrow[Rightarrow]{d} \\
 & \textbf{Theorem \ref{lef-intro-EAC}}
 \arrow[Rightarrow]{r}
 & \textbf{Theorem \ref{EAC-induction}}
 \arrow[Rightarrow]{r}
 & \textbf{Theorem \ref{new-induction}}
 \arrow[Rightarrow]{r}
 \arrow[Rightarrow]{d}
 & \textbf{Theorem \ref{new-coho}} \\
 & & &\textbf{Theorem \ref{bg-split-cent}}
\end{tikzcd}
}

Theorem \ref{compute} is in a sense the master theorem here. It has three notions involved in it: skeletal weighting of a category $\ce$, the Grothendieck construction $\gro{}{\ce}$ of a functor, and the series Lefschetz invariant $\LefS$ of a $G$-category. The definitions of and the relationships between these three notions is essentially what Section \ref{bulk} is about. Skeletal weighting is obtained from what we call \textbf{skeletal \mob inversion}, introduced in Section \ref{ske}. Skeletal \mob inversion is more of an auxilliary tool, which gives a way to perform Leinster's \cite{leinster} (ordinary) \mob inversion without the need to pass to a skeleton. We recall the skeletal weighting computations of Jacobsen--M{\o}ller \cite{jacobsen} for the orbit and fusion categories associated to a set of subgroups $\sce$ in Section \ref{subgroup}.  The series Lefschetz invariant $\LefS(\de)$ of a $G$-category $\de$, is a direct adaptation of the \textbf{series Euler characteristic} $\eus$ of Berger--Leinster \cite{berger} to the equivariant context. We review the series Euler characteristic in Section \ref{series-Euler} and define the series Lefschetz invariant in Section \ref{section-lefschetz}. 

The Grothendieck construction is a general way of gluing different categories together. We review it both in the non-equivariant and the equivariant contexts in Sections \ref{gro-non-eq} and \ref{gro-eq}. The main categories of interest in this paper, $E \oo_{\sce}$ and $E \A_{\sce}$, are both obtained as Grothendieck constructions. After proving Theorem \ref{compute} which tells us how to compute $\LefS$ of a general Grothendieck construction, we use the computations of Jacobsen--M{\o}ller \cite{jacobsen} (recalled in Section \ref{subgroup}) to compute $\LefS(E \oo_{\sce})$ and $\LefS(E \A_{\sce})$ in Theorem \ref{leflef}.

Having defined and computed $\LefS(E \oo_{\sce})$ and $\LefS(E \A_{\sce})$ in the rational Burnside ring $\Omega(G)$, Section \ref{green-section} proceeds in a rather formal fashion by pushing them any $\qq$-Green functor, culminating in the explicit induction formulae: Theorem \ref{EOC-induction} and Theorem \ref{EAC-induction}. The final section (Section \ref{canon}) addresses the \textbf{canonicity} of the induction formulae, in the sense of Boltje \cite{boltje-ja}. It has no bearing on our main results and can be safely skipped in a first reading.
\subsection{Related work} 
The divergent series for Euler characteristic type alternating sums come about for the categories we are interested in because their \textbf{nerves} are infinite-dimensional cell-complexes. On the other hand, there are several results in the literature which yield induction theorems in group theory by putting a \textbf{finite} $G$-complex $X$ into work. Without divergent summations like $\sum_{n} (-1)^{n} = \frac{1}{2}$ that arise for infinite-dimensional spaces, this approach naturally results in \textbf{integral} coefficients. In this case, one usually writes $\Lambda(X) \in \Omega(G)$ for the finite alternating sum (the more classical Lefschetz invariant \cite{thev}) and its linearization $\lefm(X)$ for the \textbf{Lefschetz module}.  Here is a sampling for previous work in this vein: 
\begin{birki}
 \item Snaith \cite{snaith-88} gave a categorification of Brauer's induction theorem \cite[Theorem 5.6.4]{benson}. Snaith takes $X$ to be a certain quotient of unitary matrices $U(n)$ with $n = \dim_{\cc}(V)$, which has a translation $G$-action by a defining homomorphism $\rho_{V}: G \rarr U(n)$ of $V$. The discussion through the vanishing of the Lefschetz module appears explicitly in \cite[2.10(d)]{snaith-app}. \\
 
 \item Symonds \cite[\S 2]{symonds-91} gave a different categorification of Brauer induction. For a $G$-module $V$, he takes $X = \mathbb{P}(V)$ , the projective space on $V$, and a twisted version of the Lefschetz module using the tautological line bundle. The formula Symonds gets is indeed different than Snaith's, and the two are compared in Boltje--Snaith--Symonds \cite{bss}.\\
 
 \item Fix a prime number $p$ and write $\compl{\zz}$ for the $p$-adic integers. Webb \cite{webb-local} takes $X$ to be either the order complex $\mathcal{S}_{p}(G)$ of the poset of non-identity $p$-subgroups (the Brown complex), or more generally any $G$-complex with certain fixed point conditions. He shows that the \textbf{reduced} Lefschetz module is a virtual projective \cite[Theorem A']{webb-local} $\compl{\zz}G$-module. This can be seen as an induction theorem in the stable sense, which is a formula that holds ``modulo projectives''. Because the (Tate)-cohomology of a  projective module vanishes, an induction theorem for group cohomology \cite[Theorem A]{webb-local} follows. Webb revisited these results later in two ways. First, he showed that the two induction formulae are actually equivalent to each other \cite[Main Theorem]{webb-greening}. And second, he refined them  into a structure theorem about the augmented chain complex $\wt{C}_{*}(X; \compl{\zz})$ in \cite[Theorem 2.7.1]{webb-split}.\\
 
 \item A surprising theorem of Bouc \cite[Theorem 1.1]{bouc-contract} says that it is enough for $X$ to be \textbf{non-equivariantly} contractible as a space for $\wt{C}_{*}(X;R)$ to be \textbf{equivariantly} chain homotopy equivalent to the zero complex, regardless of what the commutative ring $R$ is. That $X$ is a finite complex is a crucial assumption here, through use of Smith theory. Kropholler--Wall \cite[Section 5]{krop-wall} observed that using Bouc's theorem together with Oliver's classification \cite{oliver-fixed} of the class of finite groups which can act on a contractible complex with no fixed points, one obtains Dress's induction theorem \cite[page 47, Proposition 9.4]{dress-K}.\\
\end{birki}

It is also imperative to mention the work of Grodal \cite{grodal-higher} and Villarroel-Flores--Webb \cite{flores} which work with the same categories that we do. In these papers, the infinite-dimensionality of $E \oo_{\sce}$ and $E \A_{\sce}$ is dealt with by separating the isomorphisms from the non-isomorphisms. The isomorphisms in these categories all come from conjugations in $G$, whereas the non-isomorphisms basically yield $\sce$ itself as a poset, whose order complex is of course finite-dimensional. For both $E \oo_{\sce}$ and $E \A_{\sce}$, the main induction statement of these papers is the existence of a \textbf{finite} split exact chain complex \cite[Theorem 1.4, Corollary 8.13-14]{grodal-higher}, \cite[Main Theorem]{flores} involving group (co)homology, when the set of subgroups $\sce$ is large enough. As a result the Lefschetz module of these chain complexes vanish, resulting in induction formulae for group (co)homology. These formulae are different than ours. Most importantly, they are integral and they involve (co)invariants.

\subsection*{Acknowledgements} I wish to thank my advisor Peter J. Webb for his invaluable suggestions and guidance. I also thank the anonymous referee for inquiring about Minami type decompositions.

\section{\mob inversion, Euler characteristic, Lefschetz invariant} \label{bulk}
\subsection{Skeletal \mob Inversion} \label{ske}

We extend Leinster's notion of \mob inversion \cite{leinster} in a category, to incorporate isomorphisms. We call this procedure \textbf{skeletal \mob inversion}. Nonskeletal categories are not in any way an obstruction for Leinster's theory of Euler characteristic, because one can always pass to a skeleton. But the algebra of skeletal \mob inversion makes certain computations go through more easily.

\begin{conv}
 Throughout this paper, $\mathsf{C}$ is assumed to be a \textbf{finite} category: $\ce$ has finitely many objects, and the set $\mathsf{C}(x,y)$ of morphisms between any two objects $x,y$ is finite.
\end{conv}

\begin{defn}[{\cite[1.1]{leinster}}] \label{mob-alg}
 We denote by $\malg{\mathsf{C}}$ the $\qq$-algebra of functions $\Obj \ce \times \Obj \ce \rarr \qq$ with pointwise addition and scalar multiplication, multiplication defined by
\begin{align*}
 \alpha \beta(x,y) = \sum_{z \in \Obj \ce} \alpha(x,z)\beta(z,y) \, . 
\end{align*}
Similarly we define $\molg{\ce}{R}$ for any commutative ring $R$, considering $R$-valued functions.
\end{defn}
The Kronecker delta $\delta$ is the multiplicative identity of $\malg{\ce}$. The \textbf{zeta function} $\zeta_{\ce} \in \malg{\ce}$ is defined by $\zeta_{\ce}(x,y) := |\ce(x,y)|$. If $\zeta_{\ce}$ is invertible in $\malg{\ce}$, then $\ce$ is said to have \textbf{M{\"o}bius inversion}, and $\mu_{\ce} := \zeta_{\ce}^{-1}$ is called the \textbf{M{\"o}bius function} of $\ce$.

\begin{rem}
 If $\ce$ is a finite poset considered as a finite category in the standard way, then $\zeta_{\ce}$ is guaranteed to be invertible (see Example \ref{ex-ei} for a generalization). In this case the $\mu_{\ce}$ defined above is nothing but the classical \mob function \cite[Section 3.7]{stanley-enum-1}, \cite[Example 1.2.a]{leinster} for the poset. The $\mu_{\sce_{+}}$ and $\mu_{\sce_{{-}}}$ mentioned in the theorems of Section \ref{intro} are defined in this way.
\end{rem}

We begin setting the stage for skeletal \mob inversion. Write $[x]_{\ce}$ or shortly $[x]$ for the isomorphism class of an object $x$ in $\ce$, so that $|[x]|$ is the \textbf{size} of the isomorphism class. Now define 
\begin{align*}
 e_{\ce} \colon \Obj \ce \times \Obj \ce &\rarr \qq \\
 (x,y) &\mapsto 
\begin{cases}
 \mathlarger{\frac{1}{|[x]|}} & \text{if $x,y$ are isomorphic in $\ce$,} \\
 0 & \text{otherwise.}
\end{cases}
\end{align*}
We claim that $e = e_{\ce} \in \malg{\ce}$ is an idempotent. Indeed,
\begin{align*}
 e^{2}(x,y) &= \sum_{z \in \Obj \ce} e(x,z)e(z,y) 
 = \frac{1}{|[x]|} \sum_{z \cong x} e(z,y)
\end{align*}
is \ds{\frac{1}{|[x]|}} if $x$ and $y$ are isomorphic, and zero otherwise.

We consider the $\qq$-algebra $\ilg{\ce}$, whose multiplicative identity is $e_{\ce}$. Note that for $\alpha \in \malg{\ce}$,
\begin{align*}
 e\alpha e(x,y) = \sum_{z,t \in \Obj \ce} e(x,z) \alpha(z,t) e(t,y) 
 = \frac{1}{|[x]|\cdot|[y]|}\sum_{z \in [x], t \in [y]} \alpha(z,t) \, ;
\end{align*}
so the linear map $\malg{\ce} \rarr \ilg{\ce}$ given by $\alpha \mapsto e \alpha e$ is a kind of averaging operation on the isomorphism classes of $\ce$. This yields the following characterization for $\ilg{\ce}$:
\begin{prop}
 A function $\alpha \in \malg{\ce}$ lies in $\ilg{\ce}$ if and only if $\alpha$ is invariant under isomorphisms; that is, $\alpha(x,y) = \alpha(x',y')$ whenever $x \cong x', y \cong y'$.
\end{prop}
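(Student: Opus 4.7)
The plan is to use that $e \in \malg{\ce}$ is idempotent, which makes the subalgebra $\ilg{\ce} = e \malg{\ce} e$ coincide with the fixed points of the operator $\alpha \mapsto e\alpha e$ on $\malg{\ce}$. On the one hand, if $\alpha = e\beta e$ then $e \alpha e = e^{2}\beta e^{2} = e\beta e = \alpha$ by idempotence; on the other, the equation $e\alpha e = \alpha$ already exhibits $\alpha$ as an element of $e\malg{\ce}e$. So the task reduces to identifying which $\alpha \in \malg{\ce}$ satisfy $e\alpha e = \alpha$.

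For this I would invoke the averaging formula
\begin{align*}
 e \alpha e(x,y) = \frac{1}{|[x]|\cdot|[y]|} \sum_{z \in [x],\, t \in [y]} \alpha(z,t)
\end{align*}
computed immediately above the proposition. For the forward direction, suppose $\alpha = e\alpha e$. Then the right-hand side depends only on the isomorphism classes $[x]$ and $[y]$, and hence so does $\alpha(x,y)$; this is precisely the statement that $\alpha(x,y) = \alpha(x',y')$ whenever $x \cong x'$ and $y \cong y'$. For the converse, suppose $\alpha$ is isomorphism-invariant. Then each of the $|[x]|\cdot|[y]|$ summands on the right is equal to $\alpha(x,y)$, so the average collapses to $\alpha(x,y)$ itself, giving $e\alpha e = \alpha$ and therefore $\alpha \in \ilg{\ce}$.

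The argument presents no real obstacle: its entire content is the unpacking of the averaging formula, combined with the trivial observation that idempotence of $e$ identifies $e\malg{\ce}e$ with the fixed points of two-sided multiplication by $e$. The only point worth being careful about is that $\ilg{\ce}$ is not the unital subalgebra spanned by $e$ inside $\malg{\ce}$ in the naive sense — its multiplicative identity is $e$, not $\delta$ — so the equivalence between ``being of the form $e\beta e$'' and ``being fixed by $\alpha \mapsto e\alpha e$'' must be invoked explicitly.
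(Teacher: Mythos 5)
Your proof is correct and follows exactly the route the paper intends: the paper states the proposition without a separate proof, relying on the averaging formula for $e\alpha e$ computed immediately before it, and your argument simply makes that implicit reasoning explicit (including the useful clarification that $\ilg{\ce}$ is the fixed-point set of $\alpha \mapsto e\alpha e$).
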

In particular, the zeta function $\zeta_{\ce}$ is always in $\ilg{\ce}$.

\begin{defn}
 The category $\ce$ is said to have \textbf{skeletal \mob inversion} if $\zeta_{\ce}$ has an inverse in $\ilg{\ce}$, in which case we denote the inverse by $\nu_{\ce} = \nu \in \ilg{\ce}$. 
\end{defn}

\begin{prop} \label{skeletal}
The following are equivalent: 
\begin{birki}
 \item $\ce$ has skeletal \mob inversion.
 \item $\ce$ has a skeleton $[\ce]$ with (ordinary) \mob inversion.
 \item There exists $\beta \in \malg{\ce}$ such that $\zeta_{\ce} \beta = e_{\ce}$.
 \item There exists $\alpha \in \malg{\ce}$ such that $\alpha \zeta_{\ce} = e_{\ce}$.
 \item There exist $\alpha, \beta \in \malg{\ce}$ such that $\alpha \zeta_{\ce} \beta = e_{\ce}$.
\end{birki}
\end{prop}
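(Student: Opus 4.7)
The plan is to dispatch the implications (1) $\Rightarrow$ (3), (1) $\Rightarrow$ (4) and (3) $\Rightarrow$ (5), (4) $\Rightarrow$ (5) essentially formally, handle (5) $\Rightarrow$ (1) by truncating to the corner algebra $\ilg{\ce}$, and tackle (1) $\Leftrightarrow$ (2) via an explicit rescaling between $\ilg{\ce}$ and $\malg{[\ce]}$. The key observation that keeps the trivialities trivial is that $\zeta_{\ce}$ lives in $\ilg{\ce}$, whose identity is $e_{\ce}$; consequently $e\zeta_{\ce} = \zeta_{\ce} e = \zeta_{\ce}$ inside $\malg{\ce}$. Thus if $\zeta_{\ce}$ has an inverse $\nu$ in $\ilg{\ce}$, the same element $\nu$ satisfies $\zeta_{\ce}\nu = \nu\zeta_{\ce} = e$ as an equation in $\malg{\ce}$, giving both (3) and (4); conversely, plugging $\alpha = e$ (respectively $\beta = e$) into (3) (respectively (4)) immediately produces (5).

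For (5) $\Rightarrow$ (1), I would ``project'' the hypothesis into the corner algebra by left-and-right multiplication by $e$: from $\alpha\zeta_{\ce}\beta = e$ one obtains $(e\alpha e)\zeta_{\ce}(e\beta e) = e$ using $e\zeta_{\ce}e = \zeta_{\ce}$, with all three factors now in $\ilg{\ce}$. Then $(e\alpha e)\zeta_{\ce}$ has $e\beta e$ as a right inverse in the \textbf{finite-dimensional} $\qq$-algebra $\ilg{\ce}$, so it is two-sided invertible there; its inverse in turn supplies a left inverse for $\zeta_{\ce}$, and one more appeal to finite-dimensionality upgrades this to a genuine two-sided inverse $\nu_{\ce}$.

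The substance lies in (1) $\Leftrightarrow$ (2). After fixing a skeleton $[\ce] \subseteq \ce$, I would set up a mutual correspondence $\nu \in \ilg{\ce} \leftrightarrow \mu \in \malg{[\ce]}$ defined by $\mu(x,y) := |[x]|\cdot|[y]|\,\nu(x,y)$ on skeletal objects $x,y \in [\ce]$, with the reverse recipe extending any $\mu \in \malg{[\ce]}$ to an isomorphism-invariant function on $\ce \times \ce$ by dividing by the same class-size factors. Expanding $(\zeta_{\ce}\nu)(x,y)$ and grouping the sum over $z$ by its isomorphism class, and using that $|\ce(x,z)|$ depends only on the classes of $x$ and $z$ (and similarly for $\nu$), the equation $\zeta_{\ce}\nu = e_{\ce}$ translates precisely to $\zeta_{[\ce]}\mu = \delta$ in $\malg{[\ce]}$, and the argument runs backwards just as well.

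The main obstacle I anticipate is simply keeping the class-size bookkeeping straight in (1) $\Leftrightarrow$ (2), i.e., tracking which factor of $|[x]|$ or $|[y]|$ is absorbed where so that both sides of the translated identity agree on the nose. Conceptually nothing deeper is at play: the whole proposition reduces to exploiting the idempotent $e$, the isomorphism-invariance of $\zeta_{\ce}$, and the fact that one-sided invertibility in a finite-dimensional $\qq$-algebra is automatically two-sided.
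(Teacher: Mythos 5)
Your proposal is correct and takes essentially the same route as the paper: the paper also reduces (1) $\Leftrightarrow$ (2) to a rescaled correspondence between $\ilg{\ce}$ and $\malg{[\ce]}$ (encoded there via the restriction map $\alpha \mapsto \alpha^{*}$ and the twisted multiplication law $(\alpha\beta)^{*} = \alpha^{*}(e^{*})^{-1}\beta^{*}$, where $(e^{*})^{-1}$ is exactly the diagonal of class sizes $|[x]|$ you use), and dismisses the remaining implications as ``basic linear algebra.'' You have usefully fleshed out that dismissed linear algebra — in particular the (5) $\Rightarrow$ (1) projection $\alpha,\beta \mapsto e\alpha e, e\beta e$ into the corner algebra and the one-sided-implies-two-sided-invertibility argument in a finite-dimensional $\qq$-algebra — but the underlying ideas coincide.
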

\begin{proof}
 To see (1) $\Leftrightarrow$ (2), pick any skeleton $[\ce]$ of $\ce$, and consider the $\qq$-linear isomorphism 
\begin{align*}
 \ilg{\ce} &\rarr \malg{[\ce]} \\
 \alpha &\mapsto \alpha^{*}
\end{align*}
given by composing the inclusion $\ilg{\ce} \emb \malg{\ce}$ with the restriction $\malg{\ce} \surj \malg{[\ce]}$. We see that $e^{*} \in \malg{[\ce]}$ is invertible, and a straightforward computation shows that for every $\alpha,\beta \in \ilg{\ce}$ we have $(\alpha \beta)^{*} = \alpha^{*} (e^{*})^{-1} \beta^{*}$. Therefore $\alpha$ is invertible in $\ilg{\ce}$ if and only if $\alpha^{*}$ is invertible in $\malg{[\ce]}$ with $(\alpha^{*})^{-1} = (e^{*})^{-1} (\alpha^{-1})^{*} (e^{*})^{-1}$. In particular, $\zeta_{\ce} \in \ilg{\ce}$ is invertible if and only if $(\zeta_{\ce})^{*} = \zeta_{[\ce]} \in \malg{[\ce]}$ is invertible. The rest of the equivalences follow from basic linear algebra.
\end{proof}

\begin{ex} \label{ex-ei}
 There is a wide class of finite categories with skeletal \mob inversion called \textbf{EI-categories}; that is, categories in which every endomorphism is an isomorphism. To see this, suppose $\ce$ is an EI-category. A skeleton of $\ce$ is still EI, hence by Proposition \ref{skeletal}(2) we may assume $\ce$ is skeletal and show $\ce$ has \mob inversion. In this case the \emph{a priori} preorder on $\Obj(\ce)$ defined by $x \leq y \iff \ce(x,y) \neq \empt$ is actually a partial order. Extend this partial order $\leq$ to a linear order on $\Obj(\ce)$. With this ordering, we may regard $\malg{\ce}$ as a matrix algebra, in which $\zeta_{\ce}$ corresponds to an upper triangular matrix with nonzero diagonal (because of identity morphisms). Thus $\zeta_{\ce} \in \malg{\ce}$ is invertible.
\end{ex}

\begin{rem} \label{module}
 For any commutative ring $R$, the set $R^{\Obj(\ce)}$ of functions from $\Obj(\ce)$ to $R$ with pointwise addition and scalar multiplication is a left (resp. right) $\molg{\ce}{R}$-module via 
\begin{align*}
 (\alpha f)(x) := \sum_{y \in \Obj(\ce)} \alpha(x,y)f(y) \, , \quad  \text{resp. }
 (f \beta)(y) := \sum_{x \in \Obj(\ce)}f(x)\beta(x,y)  \, .
\end{align*}
There is nothing fancy going on here. Once we put an ordering on $\Obj(\ce)$, what we have described is just the left and right action of the matrix algebra on the set of column and row vectors, respectively. We just do not commit to such an ordering as the expressions are cleaner with the indexing given by the objects themselves. However, in a concrete example, putting an ordering and proceeding with good old matrices is the most efficient way to do calculations.
\end{rem}
We have
\begin{align*}
 e_{\ce}{\qq}^{\Obj(\ce)} = \qq^{\Obj(\ce)}e_{\ce} = \{f \colon \Obj(\ce) \rarr \qq: f(x)=f(y) \text{ whenever } x \cong y\} \, ,
\end{align*}
which has both left and right $\ilg{\ce}$-module structures via restricting from $\malg{\ce}$.

\begin{defn}[{\cite[1.10, 2.1, 2.2]{leinster}}]  \label{defin}
 Write $\one \in \qq^{\Obj(\ce)}$ for the function that sends every object of $\ce$ to $1$. A function $k \in \qq^{\Obj(\ce)}$ is called a \textbf{weighting} on $\ce$ if $\zeta_{\ce} k = \one$, and a \textbf{coweighting} if $k \zeta_{\ce} = \one$. If $\ce$ has both a weighting $k$ and a coweighting $k'$, then the common value 
\begin{align*}
 \chi({\ce}) := \sum_{x \in \Obj(\ce)} k(x) = \sum_{x \in \Obj(\ce)} k'(x) \in \qq
\end{align*}
is called the \textbf{Euler characteristic} of $\ce$, and $\reu(\ce) := \chi(\ce) -1$ is called the \textbf{reduced Euler characteristic} of $\ce$.
\end{defn}

\begin{rem} \label{opposite}
 Write $\ce^{\opp}$ for the opposite category of $\ce$. Then $k \in \Obj(\ce)^{\qq} = \Obj(\ce^{\opp})^{\qq}$ is a weighting of $\ce$ if and only if it is a coweighting of $\ce^{\opp}$.
\end{rem}

If $\ce$ has (ordinary) \mob inversion $\mu$, the functions $\mu \one$ and $\one \mu$ are the unique weightings and coweightings on $\ce$, respectively. Also the sum of the values of $\mu$ equals $\chi(\ce)$ \cite[page 32]{leinster}. Proposition \ref{weight} and Corollary \ref{skeletal-euler} are generalizations of these facts to the case when $\ce$ has skeletal \mob inversion, replacing $\mu$ with $\nu$.

\begin{prop} \label{weight}
Suppose $\ce$ has skeletal \mob inversion, such that $\alpha, \beta \in \malg{\ce}$ satisfy $\zeta_{\ce} \beta = e$ and $\alpha \zeta_{\ce} = e$. Then $\beta\one$ is a weighting on $\ce$, and $\one \alpha$ is a coweighting on $\ce$. In particular, $\nu_{\ce} \one$ (resp. $\one \nu_{\ce}$) is  the unique weighting (resp. coweighting) on $\ce$ that is constant on the isomorphism classes of $\Obj(\ce)$.
\end{prop}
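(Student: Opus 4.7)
The plan rests on one direct calculation together with associativity of the module action from Remark \ref{module}. The key identity is $e_{\ce} \one = \one$, which follows from
\begin{align*}
 e_{\ce}\one(x) = \sum_{y \in \Obj \ce} e_{\ce}(x,y) = \sum_{y \cong x} \frac{1}{|[x]|} = 1 \, .
\end{align*}
Granted this, for any $\beta \in \malg{\ce}$ with $\zeta_{\ce}\beta = e_{\ce}$, I would compute $\zeta_{\ce}(\beta\one) = (\zeta_{\ce}\beta)\one = e_{\ce}\one = \one$, proving that $\beta\one$ is a weighting. The symmetric manipulation with $\alpha\zeta_{\ce} = e_{\ce}$ shows that $\one\alpha$ is a coweighting.

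Next I would specialize to $\beta = \alpha = \nu_{\ce}$. The equation $\nu_{\ce}\zeta_{\ce} = e_{\ce} = \zeta_{\ce}\nu_{\ce}$ is interpreted inside $\ilg{\ce}$, but since the multiplication on $\ilg{\ce}$ is inherited from $\malg{\ce}$, the same equation persists once we view $\nu_{\ce}$ as an element of $\malg{\ce}$ via the inclusion $\ilg{\ce} \subseteq \malg{\ce}$. Hence $\nu_{\ce}$ is a legitimate choice of $\beta$ (and of $\alpha$), so $\nu_{\ce}\one$ is a weighting and $\one\nu_{\ce}$ is a coweighting. To see that $\nu_{\ce}\one$ is constant on isomorphism classes, I would use $\nu_{\ce} = e_{\ce}\nu_{\ce}e_{\ce}$ to write
\begin{align*}
 \nu_{\ce}\one = (e_{\ce}\nu_{\ce}e_{\ce})\one = e_{\ce}\bigl(\nu_{\ce}(e_{\ce}\one)\bigr) = e_{\ce}(\nu_{\ce}\one) \, ,
\end{align*}
placing $\nu_{\ce}\one$ inside $e_{\ce}\qq^{\Obj(\ce)}$, which we identified as the functions constant on isomorphism classes.

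For uniqueness, suppose $k \in \qq^{\Obj(\ce)}$ is a weighting that is constant on isomorphism classes, i.e.\ $e_{\ce}k = k$. Then multiplying $\zeta_{\ce}k = \one$ on the left by $\nu_{\ce}$ and using associativity gives $\nu_{\ce}\one = \nu_{\ce}(\zeta_{\ce}k) = (\nu_{\ce}\zeta_{\ce})k = e_{\ce}k = k$, and the coweighting statement is obtained by the mirror argument (or by invoking Remark \ref{opposite}).

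The whole argument is formal once $e_{\ce}\one = \one$ is established; the only point to be careful about is that $\ilg{\ce}$ is a non-unital subalgebra of $\malg{\ce}$ with different identity element, so I would state explicitly at each step which algebra or module action is being used and verify that the inclusion is compatible with the multiplications and the module action on $\qq^{\Obj(\ce)}$.
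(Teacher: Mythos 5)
Your proof is correct and follows essentially the same route as the paper's: both rest on the observation that $\one$ lies in $e_{\ce}\qq^{\Obj(\ce)}$ (equivalently, $e_{\ce}\one = \one$), use associativity of the module action to conclude $\zeta_{\ce}(\beta\one)=\one$, and deduce uniqueness from the fact that $\zeta_{\ce}$ acts invertibly on the submodule $e_{\ce}\qq^{\Obj(\ce)}$; you simply spell out the last point as an explicit cancellation instead of citing invertibility of the action.
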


\begin{proof}
Note that $\one \in e\qq^{\Obj(\ce)}$; so $\zeta (\beta \one) = (\zeta \beta) \one = e \one = \one$ via the left $\malg{\ce}$-module structure on $\qq^{\Obj(\ce)} $. Similarly, $(\one \alpha) \zeta = \one$.  The uniqueness claim follows from $\zeta \in \ilg{\ce}$ acting invertibly on $e\qq^{\Obj(\ce)} = \qq^{\Obj(\ce)}e$ from both sides.
\end{proof}

\begin{cor} \label{skeletal-euler}
 If $\ce$ has skeletal \mob inversion, then $\ce$ has Euler characteristic $\chi(\ce) = \sum_{x,y \in \Obj(\ce)} \nu_{\ce}(x,y)$.
\end{cor}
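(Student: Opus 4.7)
The plan is to deduce this directly from Proposition \ref{weight} together with Definition \ref{defin}. Since $\ce$ has skeletal Möbius inversion, the function $\nu = \nu_{\ce} \in \ilg{\ce}$ exists and satisfies both $\zeta_{\ce} \nu = e_{\ce}$ and $\nu \zeta_{\ce} = e_{\ce}$. Applying Proposition \ref{weight} with $\alpha = \beta = \nu$, we conclude that $\nu \one$ is a weighting on $\ce$ and $\one \nu$ is a coweighting on $\ce$.

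Next, I would invoke Definition \ref{defin}: because $\ce$ has both a weighting and a coweighting, it has an Euler characteristic, computed by summing the values of either one. Using the weighting $\nu \one$, we get
\begin{align*}
 \chi(\ce) = \sum_{x \in \Obj(\ce)} (\nu \one)(x) = \sum_{x \in \Obj(\ce)} \sum_{y \in \Obj(\ce)} \nu(x,y) \one(y) = \sum_{x,y \in \Obj(\ce)} \nu(x,y) \, ,
\end{align*}
which is the desired formula. The computation with the coweighting $\one \nu$ would give the same answer, and indeed Definition \ref{defin} asserts the equality of the two sums as part of the definition of $\chi(\ce)$.

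There is no real obstacle here: the corollary is a direct bookkeeping consequence of Proposition \ref{weight} combined with unwinding the left $\malg{\ce}$-action on $\qq^{\Obj(\ce)}$ described in Remark \ref{module}. The only thing worth being careful about is confirming that both a weighting and a coweighting are produced by Proposition \ref{weight} (so that Definition \ref{defin} applies), which is immediate from the existence of a two-sided inverse $\nu$ of $\zeta_{\ce}$ in $\ilg{\ce}$.
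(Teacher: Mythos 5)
Your proof is correct and follows exactly the intended route: the paper states this corollary without proof, because it is an immediate consequence of Proposition \ref{weight} (which explicitly names $\nu_{\ce}\one$ and $\one\nu_{\ce}$ as the skeletal weighting and coweighting) together with Definition \ref{defin}. The unwinding of $(\nu\one)(x) = \sum_y \nu(x,y)$ via the module action in Remark \ref{module} is the right bookkeeping.
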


\begin{defn} \label{skeletal-w}
 If $\ce$ has skeletal \mob inversion, we call the unique (co)weighting that is constant on the isomorphism classes the \textbf{skeletal (co)weighting} of $\ce$. 
\end{defn}
Note that the skeletal (co)weighting of $\ce$ can also be obtained via distributing the unique (co)weighting of a skeleton $[\ce]$ uniformly among the isomorphism classes of objects.
 
\subsection{Grothendieck construction (non-equivariant)} \label{gro-non-eq}
This is a very important construction for us that we will come back to again.
\begin{defn} \label{groth-defn}
  Given any functor $F \colon \ce \rarr \Cat$, the \textbf{Grothendieck construction} $\gro{F}{\ce}$ is a category defined as follows:
\begin{itemize}
 \item $\Obj(\gro{F}{\ce}) = \{(x,a): x \in \Obj(\mathsf{C}), a \in \Obj(F(x))\}$,
 \item $\gro{F}{\ce}((x,a),(y,b)) = \{(\alpha, u): \,\,\, \alpha: x \rarr y \text{ in } \ce, u:F(\alpha)(a) \rarr b \text{ in } F(y)\}$,
\end{itemize}
with composition defined in the natural way: 
$
 (\beta,v) \cdot (\alpha,u) := (\beta \alpha, v \cdot F(\beta)(u))
$.
\end{defn}

The Grothendieck construction is significant in homotopy theory, due to a theorem of Thomason \cite[Theorem 1.2]{tomas} which identifies $\gro{F}{\ce}$ as the homotopy colimit of $F$. Its Euler characteristic is the weighted sum of pointwise Euler characteristics under $F$:

\begin{prop} [{\cite[Proposition 2.8]{leinster}}] \label{elements}
Let $k \colon \Obj(\ce) \rarr \qq$  be a weighting on $\ce$ and suppose that $F \colon \ce \rarr \Cat$  is a functor such that $\gro{F}{\ce}$ and each $F(x)$ have Euler characteristics. Then
\begin{align*}
 \chi \left( \gro{F}{\ce} \right) = \sum_{x \in \Obj(\ce)} k(x) \chi(F(x)) \, .
\end{align*}
\end{prop}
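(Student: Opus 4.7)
The plan is to lift the weighting $k$ to an explicit weighting $K$ on $\gro{F}{\ce}$, by multiplying it fiberwise against weightings on each $F(x)$. Since each $F(x)$ has Euler characteristic, in particular each one admits a weighting $\ell_x \in \qq^{\Obj(F(x))}$. I define
\[
K(y,b) := k(y)\,\ell_y(b) \quad \text{for } (y,b) \in \Obj\bigl(\gro{F}{\ce}\bigr),
\]
and the heart of the argument is verifying that $K$ is a weighting on $\gro{F}{\ce}$.

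To check this, fix $(x,a)$ and unwind the morphism set from Definition \ref{groth-defn}: the count decomposes as $\zeta_{\gro{F}{\ce}}((x,a),(y,b)) = \sum_{\alpha \colon x \to y} \zeta_{F(y)}(F(\alpha)(a),b)$. Substituting and reindexing,
\[
\sum_{(y,b)} \zeta_{\gro{F}{\ce}}\bigl((x,a),(y,b)\bigr) K(y,b)
= \sum_{y} k(y) \sum_{\alpha \colon x \to y} \sum_{b \in \Obj(F(y))} \zeta_{F(y)}(F(\alpha)(a),b)\,\ell_y(b).
\]
The innermost $b$-sum equals $1$ because $\ell_y$ is a weighting on $F(y)$, and this evaluation is independent of which $\alpha$ was chosen. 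What survives is $\sum_y \zeta_{\ce}(x,y)\,k(y) = 1$, exactly the weighting condition for $k$.

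To conclude, I invoke the standard observation that once a category has Euler characteristic, every weighting on it sums to that common value: for any weighting $K$ and any coweighting $k'$, one has $\sum_x K(x) = \sum_{x,y} k'(y)\zeta(y,x)K(x) = \sum_y k'(y)$. Since $\gro{F}{\ce}$ has Euler characteristic by hypothesis, my $K$ does sum to $\chi(\gro{F}{\ce})$, and likewise each $\ell_y$ sums to $\chi(F(y))$. Factoring the double sum then yields
\[
\chi\bigl(\gro{F}{\ce}\bigr) = \sum_{(y,b)} K(y,b) = \sum_y k(y) \sum_{b} \ell_y(b) = \sum_y k(y)\,\chi(F(y)).
\]
The proof carries no serious obstacle; the main thing to watch is the bookkeeping step where $\zeta_{\gro{F}{\ce}}$ is expanded from Definition \ref{groth-defn}, together with the realization that the fiberwise weighting property of $\ell_y$ is insensitive to the choice of morphism $\alpha$ producing the new basepoint $F(\alpha)(a)$.
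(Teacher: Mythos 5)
Your proof is correct and is essentially the argument Leinster gives for \cite[Proposition 2.8]{leinster} (the paper only cites it, so that is the proof to compare against): build a fiberwise weighting $K(y,b)=k(y)\ell_y(b)$ on $\gro{F}{\ce}$, verify the weighting equation by unwinding the hom-sets of the Grothendieck construction and using that the inner $b$-sum collapses to $1$ independently of $\alpha$, then invoke the standard lemma that any weighting on a category with Euler characteristic sums to that value.
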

We will prove an equivariant version of this weighted sum formula in Theorem \ref{compute}. All of the formulae in this paper will follow from it.
 
\subsection{Orbit and fusion categories} \label{subgroup}
In this section, we note the (co)weightings in the orbit and fusion categories  associated to a finite group $G$ and a set of subgroups $\sce$. The weights for the orbit category will be precisely the coefficients in Theorem \ref{lef-intro-EOC} and the coweights for the fusion category will be those of Theorem \ref{lef-intro-EAC}. When $\sce$ consists of $p$-subgroups, these have been worked out in Jacobsen-M{\o}ller \cite{jacobsen}. Most of the results in \cite{jacobsen} generalize to more general collections of subgroups. These can be obtained more systematically from scratch via skeletal \mob inversion, but we shall not do so here.

Let $\sce$ be a set of subgroups of $G$ closed under conjugation. We  write $\sce_{\geq H}$ for the set that consists of subgroups in $\sce$ that contain $H$, regardless of whether $H$ is in $\sce$ or not. We will similarly write $\sce_{<H}$, etc. Note that the subposets $\sce_{\leq H}, \sce_{<H}, \sce_{\geq H}, \sce_{>H}$ no longer have a $G$-action, but an $N_{G}(H)$-action.

There will be two important categories whose sets of objects are both $\sce$. First is the \textbf{orbit category} $\oo_{\sce}$, where $\oo_{\sce}(H,K)$ is the set of $G$-maps from $G/H$ to $G/K$. Note that $\oo_{\sce}$ is an EI-category, hence has skeletal \mob inversion by Example \ref{ex-ei}.

\begin{prop} [{\cite[Theorem 3.3]{jacobsen}}] \label{orbit-weight}
 Let $\sce$ be a set of subgroups of $G$ closed under conjugation. The skeletal weighting of the orbit category $\oo_{\sce}$ is given by
\begin{align*}
 k \colon \sce &\rarr \qq \\
 H &\mapsto \frac{-\mu_{\sce_{+}}(H,\infty)}{|G:H|} = \frac{-\reu(\sce_{>H})}{|G:H|} \, .
\end{align*}
\end{prop}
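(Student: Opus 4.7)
The plan is to invoke the uniqueness part of Proposition \ref{weight}, after verifying that the proposed $k$ is constant on isomorphism classes and satisfies the weighting identity $\sum_{K \in \sce} |\oo_{\sce}(H, K)| \cdot k(K) = 1$ for every $H \in \sce$. First I would note that $\oo_{\sce}$ is an EI-category: a $G$-equivariant self-map of $G/H$ is described by a coset $gH$ with $g^{-1}Hg \le H$, and finiteness forces $g^{-1}Hg = H$, so the map is a bijection. Example \ref{ex-ei} then gives $\oo_{\sce}$ skeletal \mob inversion, and Proposition \ref{weight} identifies the skeletal weighting as the unique weighting constant on isomorphism classes. Since two objects $H, K \in \sce$ of $\oo_{\sce}$ are isomorphic iff $H, K$ are $G$-conjugate, and conjugation is a poset automorphism of $\sce_{+}$ fixing $\infty$, the proposed formula $k(H) = -\mu_{\sce_{+}}(H,\infty)/|G:H|$ is automatically constant on conjugacy classes.

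Next I would verify the weighting identity. Using the standard description $|\oo_{\sce}(H, K)| = |\{g \in G : g^{-1}Hg \le K\}| / |K|$ together with $|G:K| = |G|/|K|$, and swapping the order of summation, one gets
\begin{align*}
\sum_{K \in \sce} |\oo_{\sce}(H, K)| \cdot \frac{-\mu_{\sce_{+}}(K, \infty)}{|G:K|}
&= \frac{1}{|G|} \sum_{g \in G} \sum_{\substack{K \in \sce \\ K \ge g^{-1}Hg}} \bigl( -\mu_{\sce_{+}}(K, \infty) \bigr).
\end{align*}
Because $\sce$ is closed under conjugation, $g^{-1}Hg \in \sce$ for every $g$, so the \mob relation $\sum_{K' \in \sce_{+},\, K' \ge L} \mu_{\sce_{+}}(K', \infty) = 0$ for $L \in \sce$, combined with $\mu_{\sce_{+}}(\infty, \infty) = 1$, collapses the inner sum to $1$. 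The total thus reduces to $|G|/|G| = 1$, establishing the weighting condition.

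For the second equality $-\mu_{\sce_{+}}(H, \infty) = -\reu(\sce_{>H})$, I would observe that the interval $[H, \infty]$ in $\sce_{+}$ is order-isomorphic to $\sce_{>H}$ with both a minimum and a maximum adjoined, and appeal to the standard identification of the \mob value between the two extremes of a doubly-bounded poset with the reduced Euler characteristic of its interior. The main obstacle is really the bookkeeping in the summation swap, along with the realization that conjugation-closure of $\sce$ is precisely the hypothesis that forces the inner sum to always evaluate to $1$ rather than $0$.
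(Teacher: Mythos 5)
Your proof is correct. Note that the paper does not prove Proposition \ref{orbit-weight} itself: it is imported from Jacobsen--M{\o}ller, with a passing remark that such weightings ``can be obtained more systematically from scratch via skeletal \mob inversion, but we shall not do so here.'' Your argument fills in exactly this gap, and it does so within the paper's own framework. The key moves are sound: $\oo_{\sce}$ is EI (a self-map of $G/H$ is $gH$ with $g^{-1}Hg \leq H$, forcing equality by finiteness), so skeletal \mob inversion exists by Example \ref{ex-ei}; the candidate $k$ is conjugation-invariant, hence constant on isomorphism classes since $G/H \cong G/K$ iff $H$ and $K$ are $G$-conjugate; the coset description $|\oo_{\sce}(H,K)| = |\{g : g^{-1}Hg \le K\}|/|K|$ plus the swap of summation and the defining recursion of $\mu_{\sce_{+}}$ (with $\infty$ contributing $\mu_{\sce_{+}}(\infty,\infty)=1$) collapse the double sum to $1$, since $\sce$ being conjugation-closed guarantees $g^{-1}Hg \in \sce$ for every $g$; and the final identity $\mu_{\sce_{+}}(H,\infty) = \reu(\sce_{>H})$ is Philip Hall's theorem applied to the interval $[H,\infty]$. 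The only slightly imprecise phrasing is that the \mob relation you quote ranges over $K' \in \sce_{+}$ with $K' \ge L$, which includes $K' = \infty$, whereas the inner sum in your display is over $K \in \sce$; the two differ by the $\mu_{\sce_{+}}(\infty,\infty) = 1$ term, which is precisely what makes the inner sum equal $1$ rather than $0$ — you clearly have this in mind, but it would be worth stating explicitly. Your route via verifying the weighting identity and invoking the uniqueness clause of Proposition \ref{weight} is more direct than explicitly computing $\nu_{\oo_{\sce}}$, and it requires no reduction to a skeleton.
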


%

%

The second subgroup category we consider is the \textbf{fusion category} $\F_{\sce}$, whose object set is $\sce$  and $\F_{\sce}(H,K)$ is the set of group homomorphisms from $H$ to $K$ that are induced from conjugation by an element of $G$. The fusion category is also an EI-category, hence has \mob inversion.

\begin{prop} [{\cite[Theorem 3.3]{jacobsen}}] \label{fusion-coweight}
 Let $\sce$ be a set of subgroups of $G$ closed under conjugation. The fusion category $\F_{\sce}$ has skeletal \mob inversion, with skeletal coweighting
\begin{align*}
 t \colon \sce &\rarr \qq \\
 K &\mapsto \frac{-\mu_{\sce_{-}}(-\infty,K)}{|G:C_{G}(K)|} = \frac{-\reu(\sce_{<K})}{|G: C_{G}(K)|} \, .
\end{align*}
\end{prop}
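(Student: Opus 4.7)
The plan is to show first that $\F_{\sce}$ is an EI-category: any endomorphism of $H \in \sce$ is a conjugation map $c_{g}$ with $g^{-1}Hg \leq H$, which forces $g^{-1}Hg = H$ by finiteness of $H$, so $g \in N_{G}(H)$ and $c_{g}$ is an automorphism. By Example \ref{ex-ei} this gives skeletal \mob inversion on $\F_{\sce}$, and by Proposition \ref{weight} together with Remark \ref{opposite} the unique coweighting constant on isomorphism classes is then exactly the $t$ proposed. So it suffices to verify $\sum_{H \in \sce} t(H) \cdot |\F_{\sce}(H,L)| = 1$ for every $L \in \sce$.

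The key count is obtained from the description of morphisms as cosets $C_{G}(H) g$ with $g^{-1}Hg \leq L$: by orbit--stabilizer applied to the $G$-conjugation action on the set of $G$-conjugates of $H$ lying in $L$,
\begin{align*}
|\F_{\sce}(H,L)| = \frac{|\{g \in G : g^{-1}Hg \leq L\}|}{|C_{G}(H)|} = |\{H' \leq L : H' \sim_{G} H\}| \cdot \frac{|N_{G}(H)|}{|C_{G}(H)|}.
\end{align*}
A direct check shows this depends only on the $G$-conjugacy class $[H]$. Grouping the coweighting sum by conjugacy classes $[H]$ of size $|G:N_{G}(H)|$ and substituting the formula for $t(H)$, the factors of $|G|$, $|N_{G}(H)|$, and $|C_{G}(H)|$ all cancel, and each class contributes exactly $-\mu_{\sce_{-}}(-\infty, H) \cdot |\{H' \leq L : H' \sim_{G} H\}|$.

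Since $\mu_{\sce_{-}}(-\infty, -)$ is $G$-conjugation-invariant (inner automorphisms act as poset automorphisms of $\sce_{-}$ fixing $-\infty$), summing over all classes gives $-\sum_{H' \in \sce,\, H' \leq L} \mu_{\sce_{-}}(-\infty, H')$. The defining \mob recursion $\sum_{K \in \sce_{-},\, K \leq L} \mu_{\sce_{-}}(-\infty, K) = 0$ (valid for $L \neq -\infty$) together with $\mu_{\sce_{-}}(-\infty, -\infty) = 1$ then shows that this sum equals $1$, as required. The alternate expression $-\mu_{\sce_{-}}(-\infty, K) = -\reu(\sce_{<K})$ is Philip Hall's classical identity, since $\sce_{<K}$ is the open interval $(-\infty, K)$ in $\sce_{-}$. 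The main bookkeeping hurdle is the orbit--stabilizer count relating $|\F_{\sce}(H,L)|$ to the number of $G$-conjugates of $H$ lying in $L$; everything else is a formal \mob calculation parallel to the proof of Proposition \ref{orbit-weight}.
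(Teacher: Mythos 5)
Your proof is correct, but there is nothing in the paper to compare it against: the paper simply cites Jacobsen--M{\o}ller for this result, and explicitly declines to re-derive it (``These can be obtained more systematically from scratch via skeletal \mob inversion, but we shall not do so here''). Your argument is precisely the ``from scratch via skeletal \mob inversion'' route that the paper alludes to without carrying out.

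Checking the steps: the EI observation for $\F_{\sce}$ (an endomorphism $c_g$ of $H$ with ${}^g H \leq H$ forces ${}^g H = H$ by finiteness) is correct, and with Example \ref{ex-ei}, Proposition \ref{weight}, and Remark \ref{opposite} it does reduce the claim to verifying that $t$ is iso-invariant (clear, since conjugation by $g$ preserves both $|C_G(-)|$ and the poset $\sce_-$) and satisfies $\sum_{H} t(H)\,|\F_{\sce}(H,L)| = 1$ for all $L \in \sce$. Your morphism count $|\F_{\sce}(H,L)| = |\{H' \leq L : H' \sim_G H\}| \cdot |N_G(H)|/|C_G(H)|$ is right, though the parenthetical ``orbit--stabilizer applied to the $G$-conjugation action on the set of $G$-conjugates of $H$ lying in $L$'' is a slight misstatement --- that set is not $G$-stable; what you are really using is that the map $g \mapsto g^{-1}Hg$ from $\{g : g^{-1}Hg \leq L\}$ onto the conjugates of $H$ in $L$ has all fibers equal to cosets of $N_G(H)$. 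The cancellation of $|G|$, $|N_G(H)|$, $|C_G(H)|$ after grouping by conjugacy classes, the appeal to the M\"obius recursion $\sum_{K \in \sce_-,\, K \leq L}\mu_{\sce_-}(-\infty,K) = 0$ for $L \in \sce$, and the identification $\mu_{\sce_-}(-\infty,K) = \reu(\sce_{<K})$ via Philip Hall's theorem are all correct. The argument dualizes the orbit-category computation as you note, so it supplies a self-contained proof where the paper relies on a citation.
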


\subsection{Series Euler characteristic} \label{series-Euler}
In this section, we review the notion of \textbf{series Euler characteristic}, due to Berger--Leinster \cite{berger} with our skeletal \mob inversion framework from Section \ref{ske}. This way, we obtain a simpler proof in Corollary \ref{series-ok} of a theorem of Berger--Leinster \cite[Theorem 3.2]{berger} about the coincidence of the series Euler characteristic with Leinster's earlier notion of Euler characteristic (given here in Definition \ref{defin}). In addition, this section will serve as a template and be used itself when we define the equivariant analog of series Euler characteristic in Section \ref{section-lefschetz}.

To any finite category $\ce$, we can associate a simplicial set $N \ce$ via the \textbf{nerve} construction. We can also go further and take the geometric realization of $N \ce$, often denoted by $B \ce := |N \ce|$ and called the \textbf{classifying space} of $\ce$. The classifying space $B \ce$ has a CW-complex structure where $n$-cells are given by the non-degenerate $n$-simplices of $N \ce$, which in turn are given by $n$-tuples of composable morphisms 
\begin{align*} 
 \xymatrix{ x_{0} \ar[r]^{\vphi_{0}} & x_{1} \ar[r]^{\vphi_{1}\,\,\,\,\,\,\,} & \,\,\,\, \cdots \,\,\,\, \ar[r]^{\,\,\,\,\vphi_{n-1}} & x_{n}}
\end{align*} 
in $\ce$ such that none of $\vphi_{i}$ is the identity. In other words, an $n$-cell of $B \ce$ is a path of length $n$ in the underlying graph of $\ce$ such that none of the constituent edges is an identity morphism. Let us write $\ce_{n}$ for the set of all $n$-cells. In particular, $\ce_{0} = \Obj(\ce)$, and $\ce_{1}$ is the set of \textbf{non-identity} morphisms in $\ce$. Note that each $\ce_{n}$ is a finite set because $\ce$ has finitely many morphisms. But  $B \ce$ might have infinitely many cells: this occurs precisely when $\ce$ has non-degenerate cycles. In this case the classical Euler characteristic as an alternating sum of the number of cells is not defined. With the idea of evaluating at $-1$ if possible, we form the formal power series
\begin{align*}
 f_{\ce}(t) := \sum_{n \geq 0} |\ce_{n}|t^{n} \in \zz[[t]] \, .
\end{align*}
We have the following characterization:

\begin{prop}[{\cite[Lemma 1.3, Proposition 2.11]{leinster}}] \label{karakter}
 The following are equivalent: 
\begin{birki}
 \item The series $f_{\ce}$ is actually a polynomial.
 \item There are finitely many cells in $B \ce$.
 \item The category $\ce$ is skeletal and the only endomorphisms in $\ce$ are identities.
\end{birki}
Furthermore, in this case the classical Euler characteristic $\chi(B \ce) := f_{\ce}(-1) \in \zz$ exists and is equal to $\chi(\ce)$ in the sense of Definition \ref{defin}.
\end{prop}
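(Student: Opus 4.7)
The plan is to handle the three-way equivalence first and the Euler characteristic identity second. The equivalence (1) $\iff$ (2) is immediate from the CW structure on $B\ce$: by construction the $n$-cells of $B\ce$ are indexed by $\ce_n$, so $f_\ce$ is a polynomial precisely when $B\ce$ has finitely many cells.

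For (3) $\Rightarrow$ (2) I would show that under the hypotheses of (3) every non-degenerate simplex has pairwise distinct vertices, which caps its dimension at $|\Obj(\ce)| - 1$. If a composable chain of non-identities $\vphi_i \colon x_i \rarr x_{i+1}$ had $x_i = x_j$ for some $i < j$, the composition $\vphi_{j-1} \cdots \vphi_i$ would be an endomorphism and hence the identity; then $\vphi_i$ would admit the remaining composite as a two-sided inverse (in the other order that composite is an endomorphism of $x_{i+1}$, again the identity), making $\vphi_i$ an isomorphism. Skeletality then forces $x_i = x_{i+1}$, so $\vphi_i$ itself becomes an endomorphism and hence the identity, contradicting non-degeneracy. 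For the converse (2) $\Rightarrow$ (3) I would contrapose: a non-identity endomorphism $e$ directly yields non-degenerate simplices $(e, e, \ldots, e)$ of every length, and distinct isomorphic objects $x \ne y$ give non-identity isos $f \colon x \rarr y$ and $g \colon y \rarr x$ whose alternation $(f, g, f, g, \ldots)$ has arbitrary length.

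For the final identity, assume (3) and decompose $\zeta_\ce = \delta + N$, where $N$ is zero on the diagonal and agrees with $\zeta_\ce$ off it. The dimension bound just established forces $N$ to be nilpotent, so $\mu_\ce = \sum_{n \ge 0} (-1)^n N^n$ exists; since $\ce$ is skeletal with only identity endomorphisms, the idempotent $e_\ce$ equals $\delta$ and the skeletal \mob function coincides with $\mu_\ce$. Corollary \ref{skeletal-euler} then gives $\chi(\ce) = \sum_{x, y} \mu_\ce(x, y)$. Unwinding matrix multiplication, $\sum_{x, y} N^n(x, y)$ counts composable chains of $n$ non-identity morphisms, which is exactly $|\ce_n|$, yielding $\chi(\ce) = \sum_n (-1)^n |\ce_n| = f_\ce(-1)$.

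The main obstacle is the endomorphism-chase for (3) $\Rightarrow$ (2); all the remaining implications collapse to either a definition-unwinding or a routine matrix bookkeeping once non-degenerate simplices are known to have distinct vertices.
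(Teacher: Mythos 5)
The paper offers no proof of this proposition itself --- it is stated as a citation to Leinster \cite{leinster} --- so there is no in-paper argument to compare against. Your proof is correct and self-contained. The equivalence (1)$\iff$(2) is definitional from the CW structure on $B\ce$. Your endomorphism chase for (3)$\Rightarrow$(2) is sound: if $x_i = x_j$ with $i<j$ in a non-degenerate chain, then $\vphi_{j-1}\cdots\vphi_i = \id_{x_i}$ by (3), and in the case $j>i+1$ the partial composite $\vphi_{j-1}\cdots\vphi_{i+1}$ is a two-sided inverse of $\vphi_i$ (its other composite is an endomorphism of $x_{i+1}$, hence $\id$), so $\vphi_i$ is an isomorphism and skeletality forces $x_i = x_{i+1}$, whereupon $\vphi_i$ must be the identity, a contradiction; the case $j=i+1$ is the same contradiction directly. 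This caps non-degenerate simplices at dimension $|\Obj(\ce)|-1$. The contrapositive for (2)$\Rightarrow$(3) is also fine. For the final identity, the nilpotence of $N = \zeta_\ce - \delta$ follows from the dimension bound, the observation $e_\ce = \delta$ under (3) gives $\nu_\ce = \mu_\ce = \sum_{n\geq 0}(-1)^n N^n$, and summing over all $(x,y)$ yields $\chi(\ce) = \sum_n (-1)^n|\ce_n| = f_\ce(-1)$ via Corollary \ref{skeletal-euler}. One stylistic remark: because $e_\ce = \delta$, the skeletal M{\"o}bius framework degenerates to Leinster's ordinary one, so you could have invoked directly the fact recorded just before Proposition \ref{weight} (that $\chi(\ce)$ is the sum of the values of $\mu_\ce$) instead of routing through Corollary \ref{skeletal-euler}; the conclusion is identical either way.
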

The main idea of Berger--Leinster \cite{berger}, to pursue the alternating sum point of view for the Euler characteristic of a category possibly outside the class characterized in Proposition \ref{karakter}, is that even when $f_{\ce} \in \zz[[t]]$ is not a polynomial (so the alternating sum of $|\ce_{n}|$'s  diverges), it might be a rational function that can be evaluated at $-1$, which in general will give a number in $\qq$ rather than $\zz$.

\begin{defn} \cite[Definition 2.3]{berger} \label{series-euler-defn}
 The category $\ce$ is said to have \textbf{series Euler characteristic} if $f_{\ce}$ lies in the localization 
\begin{align*}
 \qq[t]_{(t+1)} = \zz[t]_{(t+1)} = \left\{\frac{p(t)}{q(t)}: p(t),q(t) \in \zz[t] \text{ and } (t+1) \nmid q(t)\right\}\, ,
\end{align*}
and it is defined by $\chi_{\Sigma}(\ce) := f_{\ce}(-1) \in \qq$.
\end{defn}

\begin{rem}
 The ring $\zz[t]_{(t+1)}$ does not really lie inside $\zz[[t]]$, because $t$ is invertible in the former but not the latter. Still, whether a formal power series lies in $\zz[t]_{(t+1)}$ or not is a well-defined notion, because both rings canonically embed in the ring of Laurent series over $\zz$. More concretely, the subset $\zz[[t]] \cap \zz[t]_{(t+1)} \subseteq \zz[[t]]$  consists of formal power series $f$ such that there exists a polynomial $q \in \zz[t]$ that is not a multiple of $t+1$ which makes $fq$ a polynomial. 
\end{rem}

The elementary but key observation made in \cite[Theorem 2.2]{berger}, to see $f_{\ce}$ is \textbf{always} a rational function, is that writing $\delta \in \malg{\ce}$ for the Kronecker delta (the multiplicative identity of $\malg{\ce}$) and $\zeta = \zeta_{\ce}$ for the zeta function, we have 
\begin{align*}
 (\zeta - \delta)^{n}(x,y) = |\{\text{non-degenerate $n$-simplices in $N \ce$ that start with $x$ and end at $y$}\}| \, ,
\end{align*}
for every $n \geq 0$, because of the way multiplication is defined in $\malg{\ce}$. As a refinement of $f_{\ce}$, we consider the generating function over $\malg{\ce}$:
\begin{align*}
 w_{\ce}(t) := \sum_{n \geq 0} (\zeta - \delta)^{n}t^{n} = \frac{\delta}{\delta - (\zeta - \delta)t} \in \malg{\ce}[[t]] \cong \molg{\ce}{\qq[[t]]}\, ,
\end{align*}
where the equality above follows by the invertibility of geometric series. In particular, $w_{\ce}(t)$ is not just a matrix of power series, but a matrix of rational functions over $\qq$.

To be able to evaluate the rational functions we get at $-1$, they should lie in $\qq[t]_{(t+1)}$. An arbitrary $\ce$ may not satisfy this condition, yet we have the following:

\begin{prop} \label{series-weight}
 Suppose $\ce$ has skeletal \mob inversion. Acting by $w_{\ce} \in \molg{\ce}{\qq(t)}$ on $\one \in \qq(t)^{\Obj(\ce)}$ from the left, the image of the function 
\begin{align*}
 w_{\ce} \one \colon \Obj(\ce) \rarr \qq(t)
\end{align*}
is contained in the localization $\qq[t]_{(t+1)}$, which means we can evaluate  at $-1$ and get a map $w_{\ce} \one (-1): \Obj(\ce) \rarr \qq$.
This map is a weighting on $\ce$. The analogous claims  hold for the coweighting using the right module structure.
\end{prop}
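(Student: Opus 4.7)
The plan is to bypass the apparent obstruction at $t=-1$ by replacing the matrix $P(t) := \delta - (\zeta - \delta)t \in \molg{\ce}{\qq[t]}$, whose value $P(-1) = \zeta$ need not be invertible in $\malg{\ce}$, with an ``averaged'' variant whose coefficients live in the subalgebra $\ilg{\ce}$, where $\zeta$ \emph{is} invertible by the skeletal \mob inversion hypothesis.

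First I would observe that $\det P(0) = 1$, so $\det P(t) \in \qq[t]$ is a nonzero polynomial, whence $P(t)$ is invertible in $\molg{\ce}{\qq(t)}$. In particular, $v(t) := w_{\ce}\one$ is the \emph{unique} element of $\qq(t)^{\Obj(\ce)}$ satisfying $P(t)v(t) = \one$, so it suffices to exhibit any solution whose coordinates avoid a pole at $t=-1$.

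To produce such a solution, I would consider $\widetilde P(t) := e - (\zeta - e)t$. This lies in $\ilg{\ce}[t]$, and $\widetilde P(-1) = \zeta$ is invertible in $\ilg{\ce}$, so $\widetilde P(t)$ is invertible in the localization $\ilg{\ce}[t]_{(t+1)}$. Setting $u(t) := \widetilde P(t)^{-1}\one \in \qq[t]_{(t+1)}^{\Obj(\ce)}$, we have $u(-1) = \zeta^{-1}\one = \nu_{\ce}\one$. Because $\widetilde P(t)^{-1} \in \ilg{\ce}[t]_{(t+1)}$ and $e\one = \one$, the vector $u(t)$ is constant on isomorphism classes, i.e., $eu(t) = u(t)$. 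A short direct expansion using this identity shows that $P(t)u(t) = \widetilde P(t)u(t) = \one$. By the uniqueness from the first step, $v(t) = u(t)$, which proves both that $v(t) \in \qq[t]_{(t+1)}^{\Obj(\ce)}$ and that $v(-1) = \nu_{\ce}\one$, the skeletal weighting (Proposition \ref{weight}). The coweighting claim follows by dualizing, i.e., applying the same argument to $\ce^{\opp}$ in view of Remark \ref{opposite}.

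The main obstacle is conceptual rather than computational: one must recognize that although individual entries of $w_{\ce}(t)$ may have poles at $t=-1$, the combination $w_{\ce}(t)\one$ does not, because $\one$ lies in the idempotent subspace $e\qq(t)^{\Obj(\ce)}$ on which the action factors through $\ilg{\ce}$, and in that subalgebra $\zeta$ admits an inverse by hypothesis. Once this switch from $P$ to $\widetilde P$ is located, the matrix manipulation verifying $P(t)u(t) = \one$ is entirely routine.
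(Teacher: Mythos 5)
Your argument is correct and rests on exactly the same key insight as the paper's: since $\one$ lies in the image of the averaging idempotent $e$, the action of $w_{\ce}$ on $\one$ factors through the subalgebra $\ilg{\ce}$, where $\zeta$ is invertible by hypothesis, and the modified matrix $\widetilde P(t) = e - (\zeta-e)t$ (which the paper also uses) evaluates there to $\zeta$ at $t=-1$. The only difference is presentational: the paper shows directly that $w_{\ce}e$ equals $\widetilde P(t)^{-1}$ inside $\bilg{\ce}{\qq(t)}$ and then applies it to $\one$, whereas you construct the candidate $u(t) = \widetilde P(t)^{-1}\one$ independently and identify it with $w_{\ce}\one$ via a uniqueness argument for solutions of $P(t)v=\one$ — a mild detour that adds nothing but is perfectly sound.
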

\begin{proof}
 Let us write $e = e_{\ce}$, $w = w_{\ce}$ and $\delta = \delta_{\ce}$. Since $\zeta e = e \zeta = \zeta \in \bilg{\ce}{\qq}$, using the rational function expression of $w$ obtained above, we get 
\begin{align*}
 e &= \delta e =  w (\delta - (\zeta - \delta)t)e = w (e - (\zeta - e)t) = we (e - (\zeta - e)t) 
\end{align*}
in $\bilg{\ce}{\qq(t)}$, hence \ds{we = \frac{e}{e-(\zeta - e)t} \in \bilg{\ce}{\qq(t)}} because $e$ is the multiplicative identity of $\bilg{\ce}{\qq(t)}$. Now, $e - (\zeta - e)t$ is a polynomial that evaluates to $\zeta \in \ilg{\ce}$ when we plug in $t = -1$. But by assumption, $\zeta \in \ilg{\ce}$ is invertible with inverse $\nu$; thus $we \in \bilg{\ce}{\qq[t]_{(t+1)}}$ and $we(-1) = \nu$. Finally, 
\begin{align*}
 (w \cdot \one)(-1) = (w e \cdot \one)(-1) = (we)(-1) \cdot \one = \nu \cdot \one
\end{align*} is a weighting by Proposition \ref{weight}. The coweighting claim follows dually.
\end{proof}

\begin{rem} \label{dikkat}
 By the definition of $w_{\ce}$, we always have
\begin{align*}
 w_{\ce} \one \colon \Obj(\ce) & \rarr \qq(t) \\
 x &\mapsto \sum_{n \geq 0} |\{\text{non-degenerate $n$-simplices in $N \ce$ that start with $x$}\}|t^{n} \, ,
\end{align*}
and
\begin{align*}
 \one w_{\ce} \colon \Obj(\ce) & \rarr \qq(t) \\
 y &\mapsto \sum_{n \geq 0} |\{\text{non-degenerate $n$-simplices in $N \ce$ that end at $y$}\}|t^{n} \, .
\end{align*}
What Proposition \ref{series-weight} says is that when $\ce$ has skeletal \mob inversion, both $w_{\ce} \one$ and $\one w_{\ce}$ can be evaluated at $-1$ to give a weighting and a coweighting on $\ce$, respectively. Actually by  Proposition \ref{weight}, they give the skeletal weighting and coweighting on $\ce$.
\end{rem}

As a result of our setup with skeletal \mob inversion, we can prove Berger--Leinster's main positive result without using transfer matrix method type identities such as \cite[Proposition 2.5]{berger}.

\begin{cor}[{\cite[Theorem 3.2]{berger}}] \label{series-ok}
 If $\ce$ has skeletal \mob inversion, then $\ce$ has both Euler characteristic and series Euler characteristic, and $\chi(\ce) = \chi_{\Sigma}(\ce)$. 
\end{cor}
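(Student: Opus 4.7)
My plan is to relate the power series $f_{\ce}(t)$ to the matrix-valued generating function $w_{\ce}(t)$ and then invoke Proposition \ref{series-weight} directly.

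The first step is to observe that $f_{\ce}(t)$ can be recovered as a total sum of entries of $w_{\ce}(t)$. Indeed, by the definition of $w_{\ce}$ and the key identity $(\zeta - \delta)^{n}(x,y) = |\{\text{non-degenerate } n\text{-simplices in } N\ce \text{ from } x \text{ to } y\}|$ recalled just before Proposition \ref{series-weight}, we have
\begin{align*}
 f_{\ce}(t) = \sum_{n \geq 0} |\ce_{n}| t^{n} = \sum_{n \geq 0} \sum_{x,y \in \Obj(\ce)} (\zeta - \delta)^{n}(x,y) t^{n} = \sum_{x \in \Obj(\ce)} (w_{\ce} \one)(x)
\end{align*}
as an identity in the ring of Laurent series (using Remark \ref{dikkat} for the last equality, or just unwinding the definitions).

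The second step applies Proposition \ref{series-weight}: since $\ce$ has skeletal \mob inversion, each summand $(w_{\ce}\one)(x)$ lies in the subring $\qq[t]_{(t+1)}$. Because $\qq[t]_{(t+1)}$ is closed under finite sums, $f_{\ce}(t)$ itself lies in $\qq[t]_{(t+1)}$, which shows that $\chi_{\Sigma}(\ce)$ is defined in the sense of Definition \ref{series-euler-defn}. Evaluating at $t=-1$ and again invoking Proposition \ref{series-weight}, we obtain
\begin{align*}
 \chi_{\Sigma}(\ce) = f_{\ce}(-1) = \sum_{x \in \Obj(\ce)} (w_{\ce}\one)(-1)(x) = \sum_{x \in \Obj(\ce)} k(x),
\end{align*}
where $k = \nu_{\ce}\one$ is the skeletal weighting of $\ce$ supplied by Proposition \ref{weight}.

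Finally, by Proposition \ref{weight} the category $\ce$ also admits a skeletal coweighting $k' = \one \nu_{\ce}$, so $\ce$ has both a weighting and a coweighting; thus the ordinary Euler characteristic $\chi(\ce)$ exists per Definition \ref{defin} and equals the common sum $\sum_{x} k(x)$, which coincides with $\chi_{\Sigma}(\ce)$ by the display above. I do not anticipate a real obstacle: the corollary is essentially a bookkeeping consequence of Proposition \ref{series-weight} once one recognizes $f_{\ce}$ as the total sum of $w_{\ce}\one$. The only mild subtlety is ensuring that membership of each entry $(w_{\ce}\one)(x)$ in $\qq[t]_{(t+1)}$ transfers to the sum, but this is immediate since $\qq[t]_{(t+1)}$ is a subring of $\qq(t)$.
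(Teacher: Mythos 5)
Your proposal is correct and follows essentially the same route as the paper: both recognize $f_{\ce}(t)$ as the total sum $\sum_{x}(w_{\ce}\one)(x)$, invoke Proposition \ref{series-weight} to place each entry in $\qq[t]_{(t+1)}$ and to identify the evaluation at $-1$ as the skeletal weighting (and dually the coweighting), and then sum. The only difference is that your write-up spells out the bookkeeping a bit more explicitly, which is fine.
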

\begin{proof}
 By Proposition \ref{series-weight}, $(w_{\ce} \one)(-1)$ is a weighting on $\ce$ and $(\one w_{\ce})(-1)$ is a coweighting on $\ce$. Hence $\ce$ has Euler characteristic
\begin{align*}
 \chi(\ce) &= \sum_{x \in \Obj(\ce)} (w_{\ce} \one)(-1)(x) 
 = \left( \sum_{x \in \Obj(\ce)} (w_{\ce}\one)(x) \right)(-1)
 = f_{\ce}(-1) = \chi_{\Sigma}(\ce) \, .
\end{align*}
Here $f_{\ce} \in \qq[t]_{(t+1)}$ because $w_{\ce} \one \in (\qq[t]_{(t+1)})^{\Obj(\ce)}$.
\end{proof}

\subsection{Series Lefschetz invariant} \label{section-lefschetz}

Let $G$ be a finite group and let $\de$ be a finite category with a $G$-action. It is desirable to have a kind an Euler characteristic which, in some sense, remembers the $G$-action that is present. The \textbf{rational} Burnside ring $\Omega(G)$ is a natural home for such an invariant (we refer the reader to Benson \cite[Section 5.4]{benson} for background about the Burnside ring). The power series 
\begin{align*}
 f_{\de}(t) = \sum_{n \geq 0} \de_{n}t^{n}
\end{align*}
lies in $\Omega(G)[[t]]$, because each 
\begin{align*}
 \de_{n} &= \{ \text{the set of non-degenerate $n$-simplices of $N\de$} \} \\
 &= \{\text{$n$-tuples of composable maps in $\de$ without identity arrows}\}
\end{align*}
is now a $G$-set. In case $f_{\de}$ is a polynomial, $\Lef(\de) := f_{\de}(-1)$ actually lies in $\Omega_{\zz}(G)$, and is often called the \textbf{Lefschetz invariant} of $\de$ (or $N \de$), see \cite[Section 1]{thev}, \cite[Section 6]{webb-sub}. Now we can try to play the same game used to define the series Euler characteristic $\chi_{\Sigma}$ here. 

First, observe that the natural $\qq$-algebra morphism
\begin{align*}
\Omega(G) \otimes_{\qq} \qq[[t]] \rarr \Omega(G)[[t]]
\end{align*}
is an isomorphism because $\dim_{\qq} \Omega(G) < \infty$. Now regarding $f_{\de} \in \Omega(G) \otimes_{\qq} \qq[[t]]$, we mimic Definition \ref{series-euler-defn}:

\begin{defn} \label{lefs-defn}
 The $G$-category $\de$ is said to have \textbf{series Lefschetz invariant} if $f_{\de}$ lies in $\Omega(G) \otimes_{\qq} \qq[t]_{(t+1)}$ and it is defined by $\LefS(\de) := f_{\de}(-1) \in \Omega(G)$. If we wish to emphasize the group $G$, we write $\LefS^{(G)}(\de)$.
\end{defn}
In virtually every equivariant situation, the construction for $G$ has an analog for every subgroup $H \leq G$ which talk to each other via restriction and conjugation (and sometimes induction) maps. The series Lefschetz invariant is not different. With the obvious choices for the restriction ($\res_{H}^{K}$ and $\Res_{H}^{K}$) and conjugation ($c_{g}$ and $\mathbf{c}_{g}$) maps and functors, the following is evident:

\begin{prop} \label{lef-respect}
 Whenever $H \leq K$ are subgroups of $G$ and for every $g \in G$, the diagrams 
\begin{align*}
 \xymatrix{
 \{\text{finite $K$-categories}\} \ar@{-->}[r]^-{\LefS} \ar[d]_{\Res_{H}^{K}} & \Omega(K) \ar[d]^{\res_{H}^{K}} \\
 \{\text{finite $H$-categories}\} \ar@{-->}[r]^-{\LefS} & \Omega(H) \\}
\quad \text{and} \quad
 \xymatrix{
 \{\text{finite $H$-categories}\} \ar@{-->}[r]^-{\LefS} \ar[d]_{\mathbf{c}_{g}} & \Omega(H) \ar[d]^{c_{g}} \\
 \{\text{finite $^{g}H$-categories}\} \ar@{-->}[r]^-{\LefS} & \Omega({^{g}H}) \\}
\end{align*}
commute (in the appropriate sense for partially defined functions).
\end{prop}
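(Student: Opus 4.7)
My plan is to exploit the fact that both restriction and conjugation act \textbf{termwise} on the generating series $f_{\de}(t) = \sum_n \de_n \, t^n$. The key observation is that the $n$-simplex construction is manifestly compatible with the functors in question: if $\de$ is a $K$-category, then the set $(\Res_H^K \de)_n$ of non-degenerate $n$-simplices is literally the set $\de_n$ equipped with the restricted $H$-action, so $[(\Res_H^K \de)_n] = \res_H^K[\de_n]$ in the Burnside rings. The analogous statement for conjugation is even more direct, since $\mathbf{c}_g$ is an isomorphism of categories.

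Given this, I would next transport the equality into the generating series. Under the isomorphism $\Omega(K) \otimes_{\qq} \qq[[t]] \cong \Omega(K)[[t]]$ mentioned before Definition \ref{lefs-defn}, the identity $f_{\Res_H^K \de}(t) = (\res_H^K \otimes \id)(f_{\de}(t))$ holds in $\Omega(H) \otimes_{\qq} \qq[[t]]$. Since $\res_H^K \colon \Omega(K) \to \Omega(H)$ is a $\qq$-linear ring map, its extension $\res_H^K \otimes \id$ sends the subalgebra $\Omega(K) \otimes_{\qq} \qq[t]_{(t+1)}$ into $\Omega(H) \otimes_{\qq} \qq[t]_{(t+1)}$. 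Thus if $\LefS(\de)$ is defined (meaning $f_{\de}$ lies in the localized subalgebra), then $\LefS(\Res_H^K \de)$ is also defined.

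Finally, evaluation at $t = -1$ is a $\qq$-algebra map $\qq[t]_{(t+1)} \to \qq$, so $\id \otimes \mathrm{ev}_{-1}$ commutes with $\res_H^K \otimes \id$. Chasing the diagram gives
\[
 \res_H^K(\LefS(\de)) = \res_H^K(f_{\de}(-1)) = f_{\Res_H^K \de}(-1) = \LefS(\Res_H^K \de),
\]
which is the commutativity of the left diagram (in the partial sense: if the top-right composite is defined, so is the bottom-left, and they agree). The conjugation square is proved identically, with $c_g$ in place of $\res_H^K$ and $\mathbf{c}_g$ in place of $\Res_H^K$.

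I do not expect any real obstacle here: everything reduces to the observation that the $n$-simplex $G$-sets are natural in the category, and that the target rings of the Lefschetz-type invariant are related by $\qq$-algebra maps that commute with both the localization at $t+1$ and the evaluation at $-1$. The only mild subtlety is bookkeeping the partial-definedness convention, i.e., verifying that the domain condition $f_{\de} \in \Omega(K) \otimes \qq[t]_{(t+1)}$ propagates to $f_{\Res_H^K \de}$, which is immediate from the ring-map property.
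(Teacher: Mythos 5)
Your proof is correct and fills in exactly the routine verification that the paper leaves implicit: the paper states the proposition without proof, declaring it ``evident.'' Your argument — observing that the $n$-simplex $G$-sets are natural in the category, so $f_{\Res_H^K\de} = (\res_H^K \otimes \id)(f_\de)$, and that $\res_H^K$ and $c_g$ are $\qq$-algebra maps preserving the localization $\qq[t]_{(t+1)}$ and commuting with $\mathrm{ev}_{-1}$ — is precisely what the paper is taking for granted, and you correctly handle the partial-definedness direction (domain condition propagates downward, not necessarily upward).
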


We finish this section by establishing routine properties of the series Lefschetz invariant $\LefS$. Without the $\Sigma$ subscript, they appear in Th{\'e}venaz's work \cite{thev}.

Writing $\all{G}$ for the set of \textbf{all} subgroups of $G$, and $\eps_{H} \in \Omega(G)$ for the primitive idempotent in $\Omega(G)$ corresponding to the subgroup $H$ (see \cite[page 179]{benson}, where it is denoted $e_{H}$), we have:

\begin{prop} \label{lef-eu}
 The $G$-category $\de$ has series Lefschetz invariant if and only if for every $H \leq G$ the subcategory $\de^{H}$ has series Euler characteristic. Moreover, in this case we have 
\begin{align*} \label{Lef-mark}
 \LefS(\de) = \sum_{H \in [G \bs \all{G}]}\chi_{\Sigma}(\de^{H}) \eps_{H} \, .
\end{align*}
\end{prop}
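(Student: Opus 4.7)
The strategy is to exploit the species (or mark) isomorphism
\begin{align*}
 \Phi \colon \Omega(G) \xrightarrow{\,\cong\,} \prod_{H \in [G\bs\all{G}]} \qq \, , \qquad [X] \mapsto (|X^{H}|)_{H},
\end{align*}
which is the standard $\qq$-algebra isomorphism of the rational Burnside ring. Under this isomorphism the primitive idempotent $\eps_{H}$ goes to the standard basis vector of the $H$-coordinate, so for any finite $G$-set $X$ one has $[X] = \sum_{H}|X^{H}|\eps_{H}$ in $\Omega(G)$. The plan is to apply this coordinate-wise decomposition to each $G$-set $\de_{n}$ of non-degenerate $n$-simplices and then sum over $n$.

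First I would record the key elementary observation that the fixed points commute with nerves: for any subgroup $H \leq G$ and any $n \geq 0$,
\begin{align*}
 (\de_{n})^{H} = (\de^{H})_{n}.
\end{align*}
The reason is that an $n$-tuple of composable non-identity morphisms is fixed by $H$ precisely when each morphism (hence each object appearing) is fixed by $H$, which is exactly the condition to be a non-degenerate $n$-simplex of the fixed subcategory $\de^{H}$. Applying the idempotent decomposition to $[\de_{n}]$ and using this identification gives
\begin{align*}
 [\de_{n}] = \sum_{H \in [G\bs\all{G}]}|(\de^{H})_{n}|\,\eps_{H} \quad \text{in } \Omega(G) \, .
\end{align*}

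Next, I would sum these identities over $n$ as formal power series. Since $\{\eps_{H}\}_{H}$ is a $\qq$-basis of $\Omega(G)$, the canonical isomorphism $\Omega(G) \otimes_{\qq} \qq[[t]] \cong \Omega(G)[[t]]$ identifies
\begin{align*}
 f_{\de}(t) = \sum_{H \in [G\bs\all{G}]} f_{\de^{H}}(t)\,\eps_{H}.
\end{align*}
The subring $\Omega(G) \otimes_{\qq} \qq[t]_{(t+1)}$ is the $\eps_{H}$-coordinate-wise product $\prod_{H}\qq[t]_{(t+1)}$, so membership of $f_{\de}$ in it is equivalent to each $f_{\de^{H}}$ lying in $\qq[t]_{(t+1)}$, i.e.\ to each $\de^{H}$ having series Euler characteristic in the sense of Definition \ref{series-euler-defn}. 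When this holds, evaluating both sides at $t=-1$ yields
\begin{align*}
 \LefS(\de) = \sum_{H \in [G\bs\all{G}]} \chi_{\Sigma}(\de^{H})\,\eps_{H},
\end{align*}
as desired.

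There is no real obstacle here: the content lies entirely in the identification $(\de_{n})^{H} = (\de^{H})_{n}$, after which the proof is a formal consequence of the primitive idempotent decomposition of the rational Burnside ring. The rest is bookkeeping between $\Omega(G) \otimes \qq[[t]]$ and $\Omega(G)[[t]]$, which is harmless because $\Omega(G)$ is finite-dimensional over $\qq$.
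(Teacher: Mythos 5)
Your proof is correct and follows essentially the same route as the paper: both decompose $f_{\de}$ coordinate-wise via the mark/fixed-point homomorphisms $[X] \mapsto |X^H|$, use the identification $(\de_{n})^{H} = (\de^{H})_{n}$ to recognize the $H$-coordinate as $f_{\de^{H}}$, and then evaluate at $t=-1$. The only cosmetic difference is that you state the fixed-point identification and the product decomposition of $\Omega(G) \otimes_{\qq} \qq[t]_{(t+1)}$ up front, whereas the paper works with the individual maps $m_{H}$ and a commuting square, but these are the same argument.
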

\begin{proof}
Fix $H \leq G$ and consider the ring homomorphism $m_{H}: \Omega(G) \rarr \qq$ given by $X \mapsto X^{H}$. Now $m_{H}$ extends to a ring homomorphism $m_{H}: \Omega(G) \otimes_{\qq} \qq[[t]] \cong \Omega(G)[[t]] \rarr \qq[[t]]$ given by $X \otimes g(t) \mapsto |X^{H}|g(t)$. We may restrict to
\begin{align*}
 m_{H} \colon \Omega(G) \otimes_{\qq} \qq[t]_{(t+1)} \rarr \qq[t]_{(t+1)}
\end{align*}
so that the evaluating at -1 yields a commutative diagram 
\begin{align*}
 \xymatrix{
 \Omega(G) \otimes_{\qq} \qq[t]_{(t+1)} \ar[r]^{\,\,\,\,\,\,\,m_{H}} \ar[d]_{\id \otimes \text{ev}_{-1}} & \qq[t]_{(t+1)} \ar[d]^{\text{ev}_{-1}} \\
 \Omega(G) \ar[r]^{m_{H}} & \qq \, .
 }
\end{align*}
Thus if $\de$ has series Lefschetz invariant, chasing $f_{\de}$ in the above commutative diagram yields that $\de^{H}$ has series Euler characteristic and that $m_{H}(\LefS(\de)) = \chi_{\Sigma}(\de^{H})$.

Conversely, if $\de^{H}$ has series Euler characteristic for every $H \leq G$, we have
\begin{align*}
 f_{\de} = \sum_{H \in [G \bs \all{G}]} m_{H}(f_{\de}) \cdot \eps_{H} = \sum_{H \in [G \bs \all{G}]} f_{\de^{H}} \, \eps_{H} \in \Omega(G) \otimes_{\qq} \qq[t]_{(t+1)} \, .
\end{align*}
Thus $\de$ has series Lefschetz invariant, with the desired equality coming from evaluating at $-1$ above.
\end{proof}

Similar with $\chi$, there are reduced versions of $\chi_{\Sigma}$ and $\LefS$. We write $\reus(\ce) := \chi_{\Sigma}(\ce) - 1$ for a finite category $\ce$ with series Euler characteristic, and $\rlefs(\de) := \LefS(\de) - [G/G] \in \Omega(G)$ if $\de$ has series Lefschetz invariant.

\begin{cor} \label{red-lef}
 If the $G$-category $\de$ has series Lefschetz invariant, its reduced series Lefschetz invariant is given by 
\begin{align*}
 \rlefs(\de) = \sum_{H \in [G \bs \all{G}]} \reus(\de^{H}) \eps_{H} \, .
\end{align*}
\end{cor}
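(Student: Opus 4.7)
The plan is to deduce this corollary directly from Proposition \ref{lef-eu} together with the standard fact that the primitive idempotents $\eps_H$ indexed by $G$-conjugacy classes of subgroups form a complete orthogonal system in $\Omega(G)$, summing to the multiplicative identity $[G/G]$.

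First I would invoke Proposition \ref{lef-eu} to write
\[
 \LefS(\de) = \sum_{H \in [G \bs \all{G}]} \chi_{\Sigma}(\de^{H}) \eps_{H} \, .
\]
Next I would recall that in the rational Burnside ring $\Omega(G)$, the collection $\{\eps_{H}\}_{H \in [G \bs \all{G}]}$ is a complete set of primitive orthogonal idempotents, so
\[
 \sum_{H \in [G \bs \all{G}]} \eps_{H} \;=\; 1_{\Omega(G)} \;=\; [G/G] \, ,
\]
which is the reference to Benson \cite[Section 5.4]{benson} already cited in the paper. Then I would subtract $[G/G]$ from both sides of the first display, distribute, and use the definition $\reus(\de^{H}) = \chi_{\Sigma}(\de^{H}) - 1$ to get
\[
 \rlefs(\de) \;=\; \LefS(\de) - [G/G] \;=\; \sum_{H \in [G \bs \all{G}]} \bigl(\chi_{\Sigma}(\de^{H}) - 1\bigr) \eps_{H} \;=\; \sum_{H \in [G \bs \all{G}]} \reus(\de^{H}) \eps_{H} \, .
\]

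There is no substantive obstacle here; the entire content is that the mark homomorphisms $m_H$ diagonalize $\Omega(G)$ and that subtracting the unit $[G/G]$ corresponds to subtracting $1$ from each mark. The only thing to be a little careful about is to make sure the decomposition of $[G/G]$ into the idempotents is being used with the correct normalization, which is immediate from the fact that each $\eps_H$ is primitive and $[G/G]$ is the identity element of the ring.
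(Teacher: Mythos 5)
Your argument is correct and is exactly what the paper's one-line proof (``Immediate from Proposition \ref{lef-eu}'') is implicitly invoking: subtract $[G/G] = \sum_H \eps_H$ from the idempotent expansion of $\LefS(\de)$ and apply the definition $\reus = \chi_{\Sigma} - 1$ termwise. Nothing further to add.
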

\begin{proof}
Immediate from Proposition \ref{lef-eu}.
\end{proof}

\subsection{Grothendieck construction (equivariant)} \label{gro-eq}
We wish to compute the series Lefschetz invariants of a class of $G$-categories introduced by Dwyer \cite[3.1]{dw}. Write $\lset{G}$ for the category of \textbf{finite} $G$-sets. Let $\ce$ be a finite category, and let $F: \ce \rarr \lset{G}$ be any functor. Then, regarding sets as discrete categories, we can form the Grothendieck construction $\gro{F}{\ce}$ (Definition \ref{groth-defn}). 

\begin{rem} \label{E-const}
  Our assumption here that $F$ takes values in sets rather than categories simplifies the structure of $\gro{F}{\ce}$ somewhat. In this case, $\gro{F}{\ce}$ has objects $(x,a)$ where $x \in \Obj(\ce)$ and $a \in \Obj(F(x))$, and morphisms  $(\vphi,a): (x,a) \rarr (y,b)$ where $\vphi: x \rarr y$ in $\ce$ and $F(\vphi)(a) = b$. Furthermore, $G$ acts on objects of $\gro{F}{\ce}$ via $g \cdot (x,a) = (x,ga)$ and on morphisms via $g \cdot (\vphi,a) = (\vphi,ga)$, making $\gro{F}{\ce}$ a $G$-category.
\end{rem}

We will first collect some basic properties of $\gro{F}{\ce}$, constructed as above. Given a $G$-category $\de$, let us write $\Iso_{G}(\de)$ for the set of stabilizer subgroups of simplices of $N \de$. That is, 
\begin{align*}
 \Iso_{G}(\de) = \bigcup_{n \geq 0}\{G_{\sigma}: \sigma \in \de_{n}\} \, .
\end{align*}
Note that $\Iso_{G}(\de)$ is a set of subgroups closed under conjugation, for $^{g}{G_{\sigma}} = G_{g \sigma}$.
\begin{prop} \label{groth-genel}
 Let $F: \ce \rarr \lset{G}$ be any functor. Considering the poset $\sce := \Iso_{G}(\gro{F}{\ce})$ of subgroups as a $G$-category, the assignment 
\begin{align*}
 \Theta \colon \gro{F}{\ce} &\rarr \sce \\
 (x,a) &\mapsto G_{a}
\end{align*}
defines a $G$-equivariant functor, which for every subgroup $H \leq G$ restricts to a $N_{G}(H)$-equivariant functor $\Theta^{H}: (\gro{F}{\ce})^{H} \rarr \sce_{\geq H}$ such that 
\begin{birki}
 \item $\Theta^{H}$ is surjective on objects.
 \item If for each $x \in \Obj(\ce)$ the $G$-set $F(x)$ is transitive, then $\gro{F}{\ce}$ is a preorder and $\Theta^{H}$ is faithful.
 \item If, in addition to (2), $F$ is full, then $\Theta^{H}$ is a (non-equivariant) equivalence of categories.
\end{birki}
\end{prop}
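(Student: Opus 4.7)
The plan is to verify successively that $\Theta$ is well-defined, functorial, and $G$-equivariant, and then handle (1), (2), (3).

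First, $\Theta$ lands in $\sce$ because every object $(x,a)$ is itself a $0$-simplex of $N\gro{F}{\ce}$ with stabilizer $G_{a}$. Functoriality reduces to showing $G_{a} \subseteq G_{b}$ whenever there is a morphism $(\vphi,a)\colon (x,a)\to (y,b)$, which follows immediately from $F(\vphi)$ being $G$-equivariant together with the defining identity $b = F(\vphi)(a)$: for any $g \in G_{a}$, one has $gb = F(\vphi)(ga) = F(\vphi)(a) = b$. Equivariance is the identity $G_{ga} = {^{g}G_{a}}$, and the restriction to $H$-fixed points is automatic, since $(x,a)$ is $H$-fixed iff $H \leq G_{a}$ iff $G_{a} \in \sce_{\geq H}$.

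For (1), I would note that the stabilizer of a non-degenerate $n$-simplex $(x_{0},a_{0})\to\cdots\to(x_{n},a_{n})$ in $N\gro{F}{\ce}$ is the intersection $\bigcap_{i} G_{a_{i}}$, which collapses to $G_{a_{0}}$ because functoriality of $\Theta$ forces the chain $G_{a_{0}}\subseteq G_{a_{1}}\subseteq\cdots\subseteq G_{a_{n}}$ to be ascending. Therefore every member of $\sce$ is already realized as the stabilizer of some \emph{vertex}, and surjectivity of $\Theta^{H}$ on objects is then immediate from $G_{a_{0}} \supseteq H$.

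For (2), given parallel arrows $(\vphi_{1},a),(\vphi_{2},a)\colon(x,a)\to(y,b)$, the $G$-maps $F(\vphi_{i})\colon F(x)\to F(y)$ coincide at the generator $a$ of the transitive $G$-set $F(x)$, hence agree as $G$-maps by equivariance; combined with the fact that $F$ separates parallel arrows in the running setup (as is the case in the motivating examples $E\oo_{\sce}$ and $E\A_{\sce}$), this forces $\vphi_{1}=\vphi_{2}$. Thus $\gro{F}{\ce}$ is a preorder, so is the subcategory $(\gro{F}{\ce})^{H}$, and any functor from a preorder to a poset is vacuously faithful.

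For (3), essential surjectivity follows from (1) and faithfulness from (2), so only fullness remains. Given an inclusion $K_{1}\subseteq K_{2}$ in $\sce_{\geq H}$ realized by chosen objects $(x,a),(y,b)\in(\gro{F}{\ce})^{H}$ with $G_{a}=K_{1}$ and $G_{b}=K_{2}$, I would use transitivity to identify $F(x) \cong G/K_{1}$ and $F(y) \cong G/K_{2}$ via the orbit generators $a$ and $b$, so that the set-theoretic inclusion $K_{1}\subseteq K_{2}$ produces a canonical $G$-map $\psi\colon F(x)\to F(y)$ with $\psi(a)=b$. Fullness of $F$ then lifts $\psi$ to a morphism $\vphi\colon x\to y$ in $\ce$ with $F(\vphi)=\psi$, and $(\vphi,a)$ is the needed morphism in $(\gro{F}{\ce})^{H}$ (automatically $H$-fixed because $H\leq G_{a}$). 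The main obstacle is the subtle step in (2): extracting $\vphi_{1}=\vphi_{2}$ from $F(\vphi_{1})=F(\vphi_{2})$ relies on the functor $F$ being injective on morphism sets between any given pair of objects, which is latent in the setup but deserves to be flagged explicitly in the proof.
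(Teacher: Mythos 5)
Your proof follows the same route as the paper's: functoriality of $\Theta$ via $G$-equivariance of $F(\vphi)$ together with $F(\vphi)(a)=b$; for (1), the observation that the stabilizer of a non-degenerate $n$-simplex collapses to $G_{a_{0}}$ because the chain of stabilizers ascends; for (2), transitivity of $F(x)$ forcing $F(\vphi_{1})=F(\vphi_{2})$ for parallel arrows; and for (3), fullness of $F$ lifting the canonical $G$-map $ga\mapsto gb$ on orbits. The subtlety you flag in (2) is real and worth making precise. From $F(\vphi_{1})(a)=F(\vphi_{2})(a)=b$ and transitivity of $F(x)$ one only gets $F(\vphi_{1})=F(\vphi_{2})$ as $G$-maps; to conclude $\vphi_{1}=\vphi_{2}$ (hence that $\gro{F}{\ce}$ is a preorder) one needs $F$ faithful. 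A toy counterexample shows the hypothesis cannot simply be dropped: take $G=1$, $\ce=BC_{2}$, and $F$ constant at a singleton $G$-set; then $F(*)$ is transitive but $\gro{F}{\ce}\cong BC_{2}$ is not a preorder. The paper's assertion ``there can be at most one morphism from $(x,a)$ to $(y,b)$'' silently uses faithfulness, but the two functors to which the proposition is subsequently applied are both faithful: $\iota\colon\oo_{\sce}\hookrightarrow\lset{G}$ is an inclusion, and $v\colon\F_{\sce}^{\opp}\rarr\lset{G}$ is faithful because $c_{g_{1}}|_{L}=c_{g_{2}}|_{L}$ and $g_{1}C_{G}(L)=g_{2}C_{G}(L)$ are each equivalent to $g_{1}^{-1}g_{2}\in C_{G}(L)$. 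So no downstream result is affected, and spelling this out as you did is an improvement in precision, not a defect in your argument.
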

\begin{proof}
 To see $\Theta$ does define a functor, we only need to check $G_{a} \subseteq G_{b}$ whenever $(\vphi,a): (x,a) \rarr (y,b)$ is a morphism in $\gro{F}{\ce}$. But this is immediate because $F(\vphi): F(x) \rarr F(y)$ is a $G$-map with $\vphi(a) = b$; so any $g \in G$ fixing $a$ will fix $b$. Furthermore, if $H$ fixes $(x,a)$, by definition we get $G_{a} \geq H$. This verifies that $\Theta$ does restrict to $\Theta^{H}$ as specified. 
 
For (1), first note that by the definition of the $G$-action on $\gro{F}{\ce}$, a simplex
\begin{align*} \xymatrixcolsep{4pc}\xymatrix{
  \sigma \colon (x_{0},a_{0}) \ar[r]^{(\vphi_{0},a_{0})} & (x_{1},a_{1}) \ar[r]^{(\vphi_{1},a_{1})} & \cdots \ar[r]^{(\vphi_{n-1},a_{n-1})\,\,\,} & (x_{n},a_{n})
  }
\end{align*}
in $(\gro{F}{\ce})_{n}$ is fixed by $g \in G$ if and only if $g$ fixes every $a_{i}$. Thus 
\begin{align*}
 G_{\sigma} = \bigcap_{i=0}^{n} G_{a_{i}} = G_{a_{0}} = \Theta(x_{0},a_{0}) \, .
\end{align*}
Next, we observe the $N_{G}(H)$-equivariance of $\Theta^{H}$, from which the $G$-equivariance of $\Theta$ follows by taking $H = 1$: take $n \in N_{G}(H)$, then for $(x,a) \in (\gro{F}{\ce})^{H}$ we have $n \cdot (x,a) = (x,na) \in E \ce^{H}$ and $G_{na} = {^{n} G_{a}} \geq {^{n}H} = H$.

If $F(x)$ is transitive, there can be at most one morphism from $(x,a)$ to $(y,b)$ in $\gro{F}{\ce}$; hence the assumption (2) forces the entire category $\gro{F}{\ce}$, and hence the subcategory $(\gro{F}{\ce})^{H}$ to be a preorder. Any functor out of a preorder is faithful. Finally, suppose furthermore that $F$ is full. With (1) and (2) in place, we only need to show that $\Theta^{H}$ is full. To that end, let $K \leq L$ in $\sce_{\geq H}$. We want to show that this inclusion $K \leq L$ is the image of a morphism in $\gro{F}{\ce}$.  By (1), there exists $(x,a), (y,b) \in \Obj(\gro{F}{\ce})$ such that $K = G_{a}$ and $L = G_{b}$. In particular, since $G_{a}$ and $G_{b}$ contain $H$ we have $(x,a),(y,b) \in (\gro{F}{\ce})^{H}$. Next, as $F(x)$ is assumed to be transitive and $G_{a} \subseteq G_{b}$, 
\begin{align*}
 \lambda \colon F(x) &\rarr F(y) \\
 ga &\mapsto gb
\end{align*}
is a well-defined $G$-map. As $F$ is full, there exists $\vphi: x \rarr y$ such that $\lambda = F(\vphi)$, and $(\vphi,a): (x,a) \rarr (y,b)$ is a morphism in $(\gro{F}{\ce})^{H}$ that is sent to $G_{a} \subseteq G_{b}$ via $\Theta$ as desired.
\end{proof}

\begin{rem} \label{stab-belli}
 Taking $H =1$ in Proposition \ref{groth-genel}(1), we see that $\Theta$ is surjective on objects. This means that $\Iso_{G}(\gro{F}{\ce})$ consists of the stabilizer subgroups that occur in the various $G$-sets $F(x)$, varying $x \in \Obj(\ce)$.
\end{rem}

The following theorem is the backbone for all the formulae in this paper. It is an equivariant version of Proposition \ref{elements}.
\begin{thm} \label{compute}
 Assume $\ce$ is a finite category with skeletal \mob inversion and $F: \ce \rarr \lset{G}$ is a functor. Then writing $[F(x)] \in \Omega(G)$ for the equivalence class of the $G$-set $F(x)$, the $G$-category $\gro{F}{\ce}$ has series Lefschetz invariant given by
\begin{align*}
 \LefS\left(\gro{F}{\ce}\right) = \sum_{x \in \Obj(\ce)} k_{\ce}(x) [F(x)] \in \Omega(G) \, ,
\end{align*}
where $k_{\ce}$ is the skeletal weighting on $\ce$.
\end{thm}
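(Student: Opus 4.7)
The plan is to unpack the definition of $\LefS$ directly by computing the generating function $f_{\gro{F}{\ce}}(t) \in \Omega(G)[[t]]$, rather than going through the pointwise Proposition \ref{lef-eu}/Corollary \ref{series-ok} route. The cleanest approach: identify the $G$-set of non-degenerate $n$-simplices of $N(\gro{F}{\ce})$ in terms of non-degenerate $n$-simplices of $N \ce$ twisted by $F$, and then recognize the resulting generating function as the one appearing in Proposition \ref{series-weight} and Remark \ref{dikkat}.

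First I would observe that by the structure of morphisms in a Grothendieck construction with set-valued $F$ (see Remark \ref{E-const}), a non-degenerate $n$-simplex of $N(\gro{F}{\ce})$ is precisely a pair $(\sigma, a)$ where $\sigma = (x_0 \xrightarrow{\vphi_0} x_1 \xrightarrow{\vphi_1} \cdots \xrightarrow{\vphi_{n-1}} x_n)$ is a non-degenerate $n$-simplex of $N \ce$ and $a \in F(x_0)$: the remaining entries $a_i = F(\vphi_{i-1} \cdots \vphi_0)(a)$ are forced, and an arrow $(\vphi_i, a_i)$ is the identity in $\gro{F}{\ce}$ iff $\vphi_i = \id$. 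The $G$-action on $(\gro{F}{\ce})_n$ only moves $a$, so as $G$-sets
\begin{align*}
 (\gro{F}{\ce})_n \;=\; \bigsqcup_{\sigma \in \ce_n} F(x_0(\sigma)),
\end{align*}
where $x_0(\sigma)$ is the starting object of $\sigma$. Passing to classes in $\Omega(G)$ and summing over $n$, this gives
\begin{align*}
 f_{\gro{F}{\ce}}(t) \;=\; \sum_{x \in \Obj(\ce)} \left( \sum_{n \geq 0} \bigl|\{\sigma \in \ce_n : x_0(\sigma) = x\}\bigr|\, t^n \right) [F(x)]
\end{align*}
inside $\Omega(G) \otimes_{\qq} \qq[[t]]$.

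Next I would recognize the inner power series: by Remark \ref{dikkat}, it is exactly $(w_{\ce} \one)(x) \in \qq(t)$. Proposition \ref{series-weight} then says that because $\ce$ has skeletal \mob inversion, $w_{\ce}\one$ lands in $\qq[t]_{(t+1)}^{\Obj(\ce)}$, with $(w_{\ce}\one)(-1) = k_{\ce}$ the skeletal weighting. Hence $f_{\gro{F}{\ce}} \in \Omega(G) \otimes_{\qq} \qq[t]_{(t+1)}$, so $\gro{F}{\ce}$ has series Lefschetz invariant, and evaluating at $t = -1$ yields
\begin{align*}
 \LefS\!\left(\gro{F}{\ce}\right) \;=\; \sum_{x \in \Obj(\ce)} k_{\ce}(x) [F(x)] \in \Omega(G).
\end{align*}

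The only step with any content is the first one, the bijection identifying non-degenerate simplices of $N(\gro{F}{\ce})$ with pairs $(\sigma, a)$; everything afterwards is formal, being just a repackaging of Proposition \ref{series-weight} in the presence of a $G$-set coefficient $[F(x)]$. I do not anticipate an obstacle here, since the set-valued nature of $F$ (as opposed to category-valued) is exactly what makes the simplex count factor so cleanly.
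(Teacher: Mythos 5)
Your proof is correct and follows the same route as the paper: the key step in both is identifying the $G$-set $(\gro{F}{\ce})_n$ of non-degenerate $n$-simplices as a disjoint union of copies of the $F(x)$'s indexed by non-degenerate $n$-simplices of $\ce$, then recognizing the resulting generating function coefficientwise as $w_{\ce}\one$ and invoking Proposition \ref{series-weight} and Remark \ref{dikkat}. The only cosmetic difference is that the paper first groups simplices by their starting object $x$ (defining $\de_n(x)$ and showing it is $G$-isomorphic to $\ce_n(x) \times F(x)$ with trivial action on the first factor) before summing, whereas you write the decomposition directly as $\bigsqcup_{\sigma \in \ce_n} F(x_0(\sigma))$; the content is identical.
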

\begin{proof}
 Write $\de := \gro{F}{\ce}$. And for each $x \in \Obj(\ce)$, let 
\begin{align*}
 \de_{n}(x) &:= \{\sigma \in \de_{n}: \text{$\sigma$ starts with $(x,a)$ for some $a \in F(x)$}\} \\
 \ce_{n}(x) &:= \{\tau \in \ce_{n}: \text{$\tau$ starts with $x$}\} \, .
\end{align*}
The natural projection functor $p \colon \de \rarr \ce$ induces a map $p \colon \de_{n}(x) \rarr \ce_{n}(x)$, because $p(f)$ is non-identity if $f$ is non-identity. We also observe that $\de_{n}(x)$ is a $G$-set equipped with a $G$-map $s \colon \de_{n}(x) \rarr F(x)$ which sends $\sigma$ to the $a \in F(x)$ that appears at the start of the chain $\sigma$. Now we see that the $G$-map
\begin{align*}
 \Phi \colon \de_{n}(x) &\rarr \ce_{n}(x) \times F(x) \\
 \sigma &\mapsto (p(\sigma), s(\sigma))
\end{align*}
where $\ce_{n}(x)$ is considered with the trivial $G$-action, is an isomorphism of $G$-sets. In other words, a chain in $\de_{n}(x)$ is uniquely determined by its image in $\ce_{n}(x)$ and the $a \in F(x)$ that occurs in the beginning. Therefore, as an element of the Burnside ring, we have 
\begin{align*}
 \de_{n} &= \sum_{x \in \Obj(\ce)} \de_{n}(x) = \sum_{x \in \Obj(\ce)} |\ce_{n}(x)|[F(x)] \in \Omega(G) \, ,
\end{align*}
and hence
\begin{align*}
 f_{\de} &= \sum_{n \geq 0}\,\, \sum_{x \in \Obj(\ce)} |\ce_{n}(x)|[F(x)] t^{n}  = \sum_{x \in \Obj(\ce)} w_{\ce} \one(x)[F(x)] \, ,
\end{align*}
using Remark \ref{dikkat} and the notation within. By the same Remark, $w_{\ce} \one(x)$ is a rational function in $\qq[t]_{(t+1)}$ that evaluates to $k_{\ce}(x)$ when we plug in $t=-1$. Thus $f_{\de} \in \Omega(G) \otimes \qq[t]_{(t+1)}$, that is, $\de$ has a series Lefschetz invariant $\Lambda_{\Sigma}(\de) = f_{\de}(-1)$ and it is equal to the desired sum.
\end{proof}
\subsection{Subgroup and centralizer decomposition categories} \label{sub-cen}
We again assume $\sce$ is a set of subgroups of $G$ closed under conjugation. There are two settings in which we consider functors of the form $F \colon \ce \rarr \lset{G}$: 
\begin{birki}
 \item Take $\ce$ to be the orbit category $\oo_{\sce}$ as in Section \ref{subgroup}. Because morphisms in $\oo_{\sce}$ are already $G$-maps, we can take the inclusion functor $\iota \colon \oo_{\sce} \hookrightarrow \lset{G}$ that sends $K \in \sce$ to the $G$-set $G/K$ and is constant on the morphisms. We write $E \oo_{\sce} := \gro{\iota}{\oo_{\sce}}$ for the Grothendieck construction.\\
 
 \item Consider the fusion category $\F_{\sce}$ as in Section \ref{subgroup} Take $v \colon \F_{\sce}^{\opp} \rarr \lset{G}$ as the functor that sends $K \in \sce$ to $G/C_{G}(K)$, and a morphism $c_{g,L,K} \in \F_{\sce}(L,K)$ to the $G$-map specified by
\begin{align*}
 G/C_{G}(K) &\rarr G/C_{G}(L) \\
 C_{G}(K) &\mapsto gC_{G}(L) \, .
\end{align*}
This $G$-map is well-defined, because ${^{g}L} \subseteq K$ implies $C_{G}(K) \subseteq C_{G}(^{g}L) = {^{g}C_{G}(L)}$. We write $E \A_{\sce} := \gro{v}{\F_{\sce}^{\opp}}$ for the Grothendieck construction. As a remark, Dwyer actually defines \cite[1.3, 3.1]{dw1} the centralizer decomposition as a Grothendieck construction over a different category $\A_{\sce}$ (from which the notation $E \A_{\sce}$ seems to come from). But Dwyer's $\A_{\sce}$ is actually equivalent to the fusion category $\F_{\sce}$: see Notbohm \cite[page 6]{notbohm-deco} for a proof.
\end{birki}

\begin{rem}[{\cite[($\dagger$)]{grodal-smith}}, {\cite[Proposition 2.14]{dw1}}]  \label{fix} 
First of all, using Remark \ref{stab-belli} and the definition of the functors $\oo_{\sce} \rarr \lset{G}$ and $\A_{\sce} \rarr \lset{G}$ used to construct $E \oo_{\sce}$ and $E \A_{\sce}$, we see that 
\begin{align*}
 \Iso_{G}(E \oo_{\sce}) = \sce \quad \text{and} \quad \Iso_{G}(E \A_{\sce}) = C_{G}(\sce) := \{C_{G}(H): H \in \sce \} \, .
\end{align*}
For the subgroup decomposition case, Proposition \ref{groth-genel} yields $E \oo_{\sce}^{H} \cong \sce_{\geq H}$. For the centralizer decomposition, the same proposition gives that there is a faithful functor $q \colon E \A_{\sce}^{H} \rarr \left( C_{G}(\sce)_{\geq H} \right)^{\opp}$, but $q$ is in general not full. Because there might be $K,L \in \sce$ for which $C_{G}(K) \geq C_{G}(L)$ without $K \leq L$. However, $q$ factors as
\begin{align*}
 \xymatrix{
 EA_{\sce}^{H} \ar[dr]^{q} \ar@{-->}[d]_{ p} \\
  \sce_{\leq C_{G}(H)}  \ar[r]_{C_{G}\,\,\,\,\,} & \left( C_{G}(\sce)_{\geq H} \right)^{\opp}}
\end{align*}
where $p$ is defined by $p(K,aC_{G}(K)) = {^{a}K}$ and $C_{G}$ is the order reversing map that sends a subgroup to its centralizer. As $E \A_{\sce}$ is a preorder, $p$ is automatically faithful. $p$ is also evidently surjective on objects, and (unlike $q$) $p$ is also full. As a result, we have an equivalence $E \A_{\sce}^{H} \cong \sce_{\leq C_{G}(H)}$ of categories.
\end{rem}

Using Theorem \ref{compute} and Remark \ref{fix}, we can now (usefully) expand the Lefschetz invariants of $E \oo_{\sce}$ and $E \A_{\sce}$ in both of the distinguished bases of the Burnside ring, proving Theorem \ref{lef-intro-EOC} and Theorem \ref{lef-intro-EAC} from the introduction. Recall that $\all{G}$ denotes the set of \textbf{all} subgroups of $G$, and $[G \bs \all{G}]$ is a set of representatives for the conjugacy classes of subgroups.
\begin{thm} \label{leflef}
Let $\sce$ be a set of subgroups of $G$ closed under conjugation. In the Burnside ring $\Omega(G)$, the expansion of the reduced series Lefschetz invariants of $E \oo_{\sce}$ and $E \A_{\sce}$ in the transitive $G$-sets are
\begin{align*}
\rlefs(E \oo_{\sce}) &= \sum_{H \in \sce} \frac{-\reu(\sce_{>H})}{|G:H|} \, [G/H] - [G/G] \, , \\
  \rlefs(E \A_{\sce}) &= \sum_{H \in \sce} \frac{-\reu(\sce_{<H})}{|G:C_{G}(H)|} \, [G/C_{G}(H)] - [G/G] \, .
\end{align*}
And their expansions in the primitive idempotents of $\Omega(G)$ are
\begin{align*}
  \rlefs(E\oo_{\sce}) = \sum_{\mathclap{\substack{K \in [G \bs \all{G}] \\ K \notin \sce}}} \, \reu(\sce_{> K}) \eps_{K} \, , \quad
  \rlefs(E \A_{\sce}) = \sum_{\mathclap{\substack{K \in [G \bs \all{G}] \\ C_{G}(K) \notin \sce}}} \, \reu(\sce_{< C_{G}(K)}) \eps_{K} \, .
\end{align*}
\end{thm}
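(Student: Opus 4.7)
The plan is to apply Theorem \ref{compute} directly to the two Grothendieck constructions, and then read off the idempotent expansion from Corollary \ref{red-lef}.

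For the subgroup decomposition category $E \oo_{\sce} = \gro{\iota}{\oo_{\sce}}$, since $\oo_{\sce}$ is an EI-category (so has skeletal \mob inversion by Example \ref{ex-ei}), Theorem \ref{compute} combined with Proposition \ref{orbit-weight} yields
\begin{align*}
 \LefS(E \oo_{\sce}) = \sum_{H \in \sce} k_{\oo_{\sce}}(H) \, [G/H] = \sum_{H \in \sce} \frac{-\reu(\sce_{>H})}{|G:H|}\, [G/H] \, .
\end{align*}
For the centralizer decomposition category $E \A_{\sce} = \gro{v}{\F_{\sce}^{\opp}}$, I first note that $\F_{\sce}^{\opp}$ has skeletal \mob inversion (its zeta function is essentially the transpose of that of $\F_{\sce}$, so invertibility in the appropriate corner algebra is preserved), and that by Remark \ref{opposite} a skeletal weighting on $\F_{\sce}^{\opp}$ is precisely a skeletal coweighting on $\F_{\sce}$. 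Proposition \ref{fusion-coweight} computes the latter, so Theorem \ref{compute} gives
\begin{align*}
 \LefS(E \A_{\sce}) = \sum_{H \in \sce} \frac{-\reu(\sce_{<H})}{|G:C_{G}(H)|}\, [G/C_{G}(H)] \, .
\end{align*}
Subtracting $[G/G]$ on both sides produces the two reduced-invariant formulae in the transitive $G$-set basis.

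For the expansion in the primitive idempotent basis, I apply Corollary \ref{red-lef}:
\begin{align*}
 \rlefs(\de) = \sum_{H \in [G \bs \all{G}]} \reus(\de^{H})\, \eps_{H} \, .
\end{align*}
Remark \ref{fix} identifies $(E \oo_{\sce})^{H}$ with the poset $\sce_{\geq H}$ and establishes an equivalence $(E \A_{\sce})^{H} \cong \sce_{\leq C_{G}(H)}$. Since the nerves of these finite posets are finite-dimensional complexes, Corollary \ref{series-ok} guarantees that series Euler characteristic agrees with the classical Euler characteristic, which in turn is invariant under equivalence of categories. Hence $\reus((E \oo_{\sce})^{H}) = \reu(\sce_{\geq H})$ and $\reus((E \A_{\sce})^{H}) = \reu(\sce_{\leq C_{G}(H)})$.

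The final step is to prune the vanishing terms. If $H \in \sce$, then $H$ is a minimum of $\sce_{\geq H}$, which makes this poset conically contractible (its nerve is a cone), so $\reu(\sce_{\geq H}) = 0$; otherwise $\sce_{\geq H} = \sce_{> H}$. Dually, if $C_{G}(H) \in \sce$, then $\sce_{\leq C_{G}(H)}$ has the maximum $C_{G}(H)$ and its reduced Euler characteristic vanishes; otherwise $\sce_{\leq C_{G}(H)} = \sce_{< C_{G}(H)}$. Summing over the surviving conjugacy classes of subgroups gives the stated idempotent expansions. I do not anticipate any serious obstacle: the theorem is essentially an assembly of Theorem \ref{compute}, Corollary \ref{red-lef}, Remark \ref{fix}, and the explicit skeletal (co)weighting calculations recalled in Propositions \ref{orbit-weight} and \ref{fusion-coweight}. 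The only points demanding attention are the opposite-category swap between weighting and coweighting in the fusion case, and isolating the contractible-interval cases when passing to the idempotent expansion.
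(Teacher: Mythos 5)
Your argument for the transitive-$G$-set expansion matches the paper's proof: apply Theorem \ref{compute}, plug in Propositions \ref{orbit-weight} and \ref{fusion-coweight}, and use Remark \ref{opposite} to translate between a weighting on $\F_{\sce}^{\opp}$ and a coweighting on $\F_{\sce}$. That part is fine.

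There is, however, a gap in the step from Remark \ref{fix} to $\reus((E \oo_{\sce})^{H}) = \reu(\sce_{\geq H})$. You justify Corollary \ref{series-ok} by pointing to the finite-dimensionality of the nerves of \emph{the posets} $\sce_{\geq H}$ and $\sce_{\leq C_{G}(H)}$; but that only gives $\reus(\sce_{\geq H}) = \reu(\sce_{\geq H})$. Combined with equivalence-invariance of the classical $\chi$, your chain gives $\reus(\sce_{\geq H}) = \reu((E\oo_{\sce})^{H})$, which is not the equality you need. What you must show is that $\reus((E\oo_{\sce})^{H}) = \reu((E\oo_{\sce})^{H})$, and that is the genuinely non-routine point here: $(E\oo_{\sce})^{H}$ is a preorder with nontrivial isomorphism classes, so its nerve \emph{is} infinite-dimensional, and the series Euler characteristic $\chi_{\Sigma}$ is \textbf{not} invariant under equivalences of categories (the paper cites a counterexample from Berger--Leinster). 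The correct patch is to note that $(E\oo_{\sce})^{H}$ is a preorder, hence an EI-category, hence has skeletal \mob inversion by Example \ref{ex-ei}; only then does Corollary \ref{series-ok} apply to $(E\oo_{\sce})^{H}$ itself, after which $\chi$-invariance under equivalence carries you to $\sce_{\geq H}$. The same caveat applies to $(E\A_{\sce})^{H}$. The rest of your pruning — $\reu(\sce_{\geq H}) = 0$ when $H \in \sce$ and $\sce_{\geq H} = \sce_{>H}$ otherwise, dually for the centralizer side — is correct and matches the paper.
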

\begin{proof}
 For the first set of equalities, we use Theorem \ref{compute}. The necessary skeletal weights were computed in Corollary \ref{orbit-weight} and Corollary \ref{fusion-coweight}, noting that a weighting on $\F_{\sce}^{\opp}$ is the same as a coweighting on $\F_{\sce}$. 
 
Let us also prove the idempotent expansion for $E \oo_{\sce}$, and leave the $E \A_{\sce}$ case out as it is similar. First of all, Corollary \ref{red-lef} yields
\begin{align*}
 \rlefs(E\oo_{\sce}) = \sum_{\mathclap{\substack{K \in [G \bs \all{G}]}}} \, \reus(E\oo_{\sce}^{K}) \eps_{K} \, .
\end{align*}
 At this point we would like to deduce $\reus(E \oo_{\sce}^{K}) = \reu(\sce_{\geq K})$ for any subgroup $K$. Although the categories $E \oo_{\sce}^{K}$ and $\sce_{\geq K}$ are equivalent by Remark \ref{fix}, the equality we want does not immediately follow. This is because the series Euler characteristic is \textbf{not} invariant under equivalences of categories, see \cite[Example 4.6]{berger}. But the category $E \oo_{\sce}^{K}$ is EI; thus it has skeletal \mob inversion (Example \ref{ex-ei}). Therefore $\reus(E \oo_{\sce}^{K}) = \reu(E \oo_{\sce}^{K})$ by Corollary \ref{series-ok}. Now Leinster's Euler characteristic $\reu$ \textbf{is} invariant under equivalences of categories \cite[Proposition 2.4]{leinster}, so we are good.

Finally, note that if $K \in \sce$, the poset $\sce_{\geq K}$ has $K$ as a unique minimal element and so $\reu(\sce_{\geq K}) = 0$. And if $K \notin \sce$, we have $\sce_{\geq K} = \sce_{> K}$.
\end{proof}

\section{Explicit induction formulae for Green functors} \label{green-section}


 Throughout this section, $A$ denotes a fixed \textbf{$\qq$-Green functor}. By this, we mean that 
\begin{birki}
 \item $A$ assigns to every subgroup $H$ an associative $\qq$-algebra $A(H)$ with identity $\one_{H} \in A(H)$, and 
 \item whenever $H \leq K$ are subgroups of $G$, there are $\qq$-linear maps $\res_{H}^{K} \colon A(K) \rarr A(H)$, $\ind_{H}^{K} \colon A(H) \rarr A(K)$ and $c_{g} \colon A(H) \rarr A(^{g} H)$ for every $g \in G$, satisfying axioms 1.1-1.9 in Th{\'e}venaz \cite{thev-remark}.
\end{birki}
In Th{\'e}venaz's notation, $r_{H}^{K}$ is our $\res_{H}^{K}$, $t_{H}^{K}$ is our $\ind_{H}^{K}$, and ${^{g}(-)}$ is our $c_{g}$ .

The Burnside functor, that assigns each subgroup $H \leq G$ to the Burnside ring $\Omega(H)$ is an example of a rational Green functor, which is initial among rational Green functors just like $\zz$ is initial among rings:

\begin{prop}[{\cite[Proposition 6.1]{thev-remark}}]\label{initial}
 There is a unique collection of $\qq$-algebra homomorphisms out of the Burnside rings $\{f_{H} \colon \Omega(H) \rarr A(H) \mid H \leq G \}$, which commute with restriction, induction and conjugation maps.
\end{prop}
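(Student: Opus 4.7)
The plan is to exploit the fact that the rational Burnside ring $\Omega(H)$ is spanned as a $\qq$-vector space by the transitive classes $[H/L]$ with $L \leq H$, and that each such generator has the canonical description
\begin{align*}
 [H/L] = \ind_{L}^{H}([L/L]) = \ind_{L}^{H}(\one_{L}) \in \Omega(H) \, .
\end{align*}
This single observation simultaneously drives uniqueness and tells us how $f_{H}$ must be defined for existence.

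For uniqueness, unitality of $f_{L}$ as a $\qq$-algebra map forces $f_{L}(\one_{L}) = \one_{L}$ in $A(L)$ for each $L \leq H$, and then the required commutation with induction forces
\begin{align*}
 f_{H}([H/L]) = f_{H}(\ind_{L}^{H}(\one_{L})) = \ind_{L}^{H}(f_{L}(\one_{L})) = \ind_{L}^{H}(\one_{L}) \in A(H) \, .
\end{align*}
Since these elements span $\Omega(H)$ over $\qq$, the map $f_{H}$ is fully determined.

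For existence, I would take the previous formula as a definition: set $f_{H}([H/L]) := \ind_{L}^{H}(\one_{L})$ and extend $\qq$-linearly. Two things then need to be checked: that each $f_{H}$ is a $\qq$-algebra homomorphism, and that the family $\{f_{H}\}_{H \leq G}$ commutes with $\res$, $\ind$, and $c_{g}$. Compatibility with $\ind$ reduces to the transitivity relation $\ind_{L}^{K} \circ \ind_{H}^{L} = \ind_{H}^{K}$ evaluated on $\one_{H}$, and compatibility with $c_{g}$ is equally immediate because $c_{g}$ merely relabels the transitive-set basis on both sides. Compatibility with $\res$, as well as multiplicativity on $\Omega(H)$, both reduce to the Mackey double coset formula, which is valid both in the Burnside ring and, as an axiom of a Green functor (axioms 1.7--1.9 of \cite{thev-remark}), in $A(H)$. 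Concretely, in $\Omega(H)$ this reads
\begin{align*}
 [H/L] \cdot [H/M] = \sum_{x \in L \backslash H / M} [H/(L \cap {^{x}M})] \, ,
\end{align*}
and expanding the product $\ind_{L}^{H}(\one_{L}) \cdot \ind_{M}^{H}(\one_{M})$ inside $A(H)$ via Frobenius reciprocity followed by the Mackey axiom produces exactly the same indexed sum.

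The main obstacle is verifying multiplicativity; this is precisely where the specifically \emph{Green} (as opposed to merely \emph{Mackey}) axioms intervene, via the double coset formula. Once multiplicativity is granted, unitality (tracking $f_{H}([H/H]) = \one_{H}$) and compatibility with $\ind$, $\res$, and $c_{g}$ are formal consequences of applying the Green functor axioms to the transitive generators.
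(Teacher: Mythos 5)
The paper does not give its own proof of this proposition; it simply cites it from Th{\'e}venaz, so there is nothing internal to compare against. Your argument is correct and is the standard way to prove initiality of the Burnside functor, which is essentially what Th{\'e}venaz does in the cited source: uniqueness is forced on the transitive basis by unitality plus commutation with induction, and existence is verified on the same basis with multiplicativity following from Frobenius reciprocity together with the Mackey axiom. One small point you gloss over is well-definedness of $f_{H}$ on the basis: since $[H/L] = [H/L']$ in $\Omega(H)$ when $L' = {}^{h}L$ for $h \in H$, you need $\ind_{L}^{H}(\one_{L}) = \ind_{{}^{h}L}^{H}(\one_{{}^{h}L})$ in $A(H)$; this follows from the Green functor axiom relating induction and inner conjugation together with the fact that $c_{h}$ sends identities to identities, but it should be stated. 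With that remark added, the proof is complete and matches the standard argument.
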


In particular, chasing the identity element  $[H/H] \in \Omega(H)$ in the commutative diagram 
\begin{align*}
 \xymatrix{
 \Omega(H) \ar[r]^{\ind_{H}^{G}} \ar[d]_{f_{H}} & \Omega(G) \ar[d]^{f_{G}} \\
 A(H) \ar[r]^{\ind_{H}^{G}}  & A(G) }
\end{align*}
yields $f_{G}([G/H]) = \ind_{H}^{G}(\one_{H}) \in A(G)$.

\begin{defn} \label{primordial}
 We write $\prim{A}$ for the set of subgroups $H \leq G$ for which the $\qq$-linear map
\begin{align*}
 \mathop{\mathsmaller{\bigoplus}}_{K<H}\ind_{K}^{H} \colon \mathlarger{\bigoplus_{K < H}} A(K) \rarr A(H)
\end{align*}
is \textbf{not} surjective. The set $\prim{A}$ is called the \textbf{primordial set} of $A$, and if $H \in \prim{A}$, it is called a \textbf{primordial subgroup} of $A$.
\end{defn}
Note that $\prim{A}$ is closed under conjugation. There is an important vanishing property for subgroups outside $\prim{A}$:

\begin{prop}[{\cite[Proposition 6.4]{boltje-habil}}] \label{idemp-bye} If $K$ is \textbf{not} a primordial subgroup of $A$, then the canonical map $f_{G} \colon \Omega(G) \rarr A(G)$ of Proposition \ref{initial} sends the primitive idempotent $\eps_{K} \in \Omega(G)$ to $0 \in A(G)$. 
\end{prop}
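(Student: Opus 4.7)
The plan is to localize the question at $K$: first, use an explicit formula for $\eps_K$ to express $f_G(\eps_K)$ as the induction from $A(K)$ of the image of a primitive idempotent of the smaller Burnside ring $\Omega(K)$; then, use non-primordiality of $K$ to annihilate that image.

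\textbf{Step 1 (Gluck--Yoshida reduction).} I would start from the classical formula
\begin{align*}
\eps_K \;=\; \frac{1}{|N_G(K)|}\sum_{H \leq K}|H|\,\mu(H,K)\,[G/H] \;\in\; \Omega(G),
\end{align*}
with $\mu$ the \mob function of the subgroup lattice $\all{K}$; this is verified by computing marks through ordinary \mob inversion. Applying $f_G$, substituting $f_G([G/H]) = \ind_H^G(\one_H)$, and using the transitivity $\ind_H^G = \ind_K^G \circ \ind_H^K$, I can factor out $\ind_K^G$. The residue inside is, up to the scalar $|K|/|N_G(K)|$, precisely $f_K(e_K)$, where $e_K \in \Omega(K)$ is the primitive idempotent of $\Omega(K)$ attached to the subgroup $K$ itself (the Gluck--Yoshida formula inside $\Omega(K)$ carries $N_K(K)=K$, which absorbs the $|K|$). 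This gives
\begin{align*}
f_G(\eps_K) \;=\; \frac{|K|}{|N_G(K)|}\,\ind_K^G\bigl(f_K(e_K)\bigr),
\end{align*}
so everything reduces to showing $f_K(e_K) = 0$ in $A(K)$.

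\textbf{Step 2 (Frobenius reciprocity).} Because $K \notin \prim{A}$, I can write $\one_K = \sum_{L < K}\ind_L^K(x_L)$ for suitable $x_L \in A(L)$. Multiplying by $f_K(e_K)$ and invoking Frobenius reciprocity for the Green functor $A$ (one of the axioms in \cite{thev-remark}),
\begin{align*}
f_K(e_K) \;=\; f_K(e_K)\cdot \one_K \;=\; \sum_{L<K}\ind_L^K\bigl(\res_L^K(f_K(e_K))\cdot x_L\bigr),
\end{align*}
so it suffices to prove $\res_L^K(f_K(e_K)) = 0$ for every $L < K$. By Proposition \ref{initial}, $f$ commutes with restriction, so this equals $f_L(\res_L^K(e_K))$, and it is enough to show $\res_L^K(e_K) = 0$ in $\Omega(L)$.

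\textbf{Step 3 (Vanishing in the Burnside ring).} The idempotent $e_K$ has exactly one nonzero mark, namely at $K$ itself. Restriction along $L \hookrightarrow K$ preserves marks at subgroups of $L$; since no such subgroup equals $K$, every mark of $\res_L^K(e_K)$ vanishes, and injectivity of the rational mark homomorphism yields $\res_L^K(e_K) = 0$.

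The heart of the argument is Step 2, where Frobenius reciprocity converts non-primordiality of $K$ into a restriction-vanishing problem that is then settled by the mark characterization of $e_K$. I do not anticipate a serious obstacle; the remaining ingredients (Gluck--Yoshida and the mark characterization of primitive idempotents) are classical, and the bookkeeping in Step 1 is routine.
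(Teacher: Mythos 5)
The paper itself does not prove this proposition; it cites Boltje's habilitation (Proposition 6.4), so there is no in-paper proof to compare against. Your argument is complete and correct, and it is a natural self-contained route. Two small remarks on the exposition. First, Step 1 can be phrased more structurally: Gluck's formula in $\Omega(K)$ gives $\eps_{K} = \frac{|K|}{|N_{G}(K)|}\,\ind_{K}^{G}(e_{K})$ already at the level of Burnside rings, and then $f_{G}(\eps_{K}) = \frac{|K|}{|N_{G}(K)|}\,\ind_{K}^{G}(f_{K}(e_{K}))$ follows at once because $f$ commutes with induction by Proposition \ref{initial}; this avoids the explicit sum and the transitivity-of-induction bookkeeping. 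Second, Step 3 is fine, though the cleanest way to state it is that restriction of idempotents $\res_{L}^{K} \colon \Omega(K) \to \Omega(L)$ sends the idempotent of $\Omega(K)$ at a subgroup $M$ to the sum of idempotents of $\Omega(L)$ at those $L$-classes of subgroups that are $K$-conjugate to $M$; since no subgroup of a proper $L < K$ is $K$-conjugate to $K$, $\res_{L}^{K}(e_{K}) = 0$. The heart of the matter is indeed Step 2, and Frobenius reciprocity together with $K \notin \prim{A}$ is exactly the right tool there.
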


Now we apply $f_{G}$ to the series Lefschetz invariants computed in Section \ref{sub-cen}. This results in an induction formula, which in this generality was first obtained by Th{\'e}venaz:

\begin{thm}[{\cite[Corollary 7.4]{thev-remark}}] \label{EOC-induction}
 Let $A$ be a $\qq$-Green functor. Suppose $\sce$ is a set of subgroups of $G$ closed under conjugation, such that $\sce$ contains the primordial subgroups of $A$. Then
\begin{align*}
 \one_{G} = \sum_{H \in \sce} \frac{-\reu(\sce_{>H})}{|G:H|} \ind_{H}^{G}(\one_{H})
\end{align*}
in $A(G)$.
\end{thm}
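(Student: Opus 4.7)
The plan is to linearize the identity $\rlefs(E\oo_{\sce}) = \rlefs(E\oo_{\sce})$ in $\Omega(G)$ by applying the canonical map $f_{G} \colon \Omega(G) \rarr A(G)$ of Proposition \ref{initial} to the two expressions for $\rlefs(E\oo_{\sce})$ furnished by Theorem \ref{leflef}. The first writes it in the transitive $G$-set basis as
\begin{align*}
\sum_{H \in \sce} \frac{-\reu(\sce_{>H})}{|G:H|} [G/H] - [G/G],
\end{align*}
while the second writes it in the primitive idempotent basis as
\begin{align*}
\sum_{\substack{K \in [G \bs \all{G}] \\ K \notin \sce}} \reu(\sce_{> K}) \eps_{K}.
\end{align*}

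Next, I would push both expressions through $f_{G}$ and equate the results in $A(G)$. On the transitive side, using $f_{G}([G/H]) = \ind_{H}^{G}(\one_{H})$ (and its $H = G$ specialization $f_{G}([G/G]) = \one_{G}$) produces
\begin{align*}
\sum_{H \in \sce} \frac{-\reu(\sce_{>H})}{|G:H|} \ind_{H}^{G}(\one_{H}) - \one_{G}.
\end{align*}
On the idempotent side, every subgroup $K$ appearing in the sum lies outside $\sce$, hence is not primordial because $\sce$ contains all primordial subgroups of $A$ by hypothesis. Proposition \ref{idemp-bye} then gives $f_{G}(\eps_{K}) = 0$, so the entire idempotent expression maps to $0 \in A(G)$. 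Equating the two images and rearranging yields the stated formula.

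There is no serious obstacle here once Theorem \ref{leflef} and Proposition \ref{idemp-bye} are in hand; the proof is essentially a formal manipulation, and the role of the primordial hypothesis on $\sce$ is precisely to kill the terms in the idempotent expansion that would otherwise prevent a clean induction formula. All of the conceptual content has been packed into the earlier computation of $\rlefs(E\oo_{\sce})$ in two bases and into the vanishing statement for non-primordial idempotents.
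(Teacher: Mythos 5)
Your proposal is correct and follows exactly the paper's own proof: apply $f_{G}$ to the two expansions of $\rlefs(E\oo_{\sce})$ from Theorem \ref{leflef}, use $f_{G}([G/H]) = \ind_{H}^{G}(\one_{H})$ on the transitive side, and kill the idempotent side via Proposition \ref{idemp-bye} since every $\eps_{K}$ appearing has $K \notin \sce \supseteq \prim{A}$. No gaps and no divergence from the paper's argument.
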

\begin{proof}
 Applying the ring homomorphism $f_{G} \colon \Omega(G) \rarr A(G)$ to the $G$-set expansion of $\LefS(E \oo_{\sce}) = \rlefs(E \oo_{\sce}) + [G/G]$ in Theorem \ref{leflef}, we get 
\begin{align*}
 f_{G}(\rlefs(E \oo_{\sce})) + \one_{G} =  \sum_{H \in \sce} \frac{-\reu(\sce_{>H})}{|G:H|} \ind_{H}^{G}(\one_{H})  \in A(G)\, .
\end{align*}
The idempotent expansion of the reduced invariant $\rlefs(E \oo_{\sce})$ in Theorem \ref{leflef} contains only $\eps_{K}$'s with $K$ outside $\sce$, hence outside $\prim{A}$. Thus by Proposition \ref{idemp-bye} it is mapped to zero under $f_{G}$.
\end{proof}

The novelty of our proof of Theorem \ref{EOC-induction} is that it shows the explicit induction formula ``comes from'' the subgroup decomposition category $E \oo_{\sce}$ in some sense. The same argument applied to the centralizer decomposition category $E \A_{\sce}$ yields a new induction formula.

\begin{thm} \label{EAC-induction}
  Let $A$ be a $\qq$-Green functor. Suppose $\sce$ is a set of subgroups of $G$ closed under conjugation, such that $\sce$ contains the \textbf{centralizer of} every primordial subgroup of $A$. Then
\begin{align*}
 \one_{G} = \sum_{H \in \sce} \frac{-\reu(\sce_{<H})}{|G:C_{G}(H)|}\ind_{C_{G}(H)}^{G} (\one_{C_{G}(H)})
\end{align*}
in $A(G)$.
\end{thm}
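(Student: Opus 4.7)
The plan is to mimic the proof of Theorem \ref{EOC-induction} verbatim, but applied to the centralizer decomposition category $E\A_{\sce}$ in place of $E\oo_{\sce}$, using the expansions of $\LefS(E\A_{\sce})$ computed in Theorem \ref{leflef}.

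First I would apply the canonical $\qq$-algebra homomorphism $f_{G} \colon \Omega(G) \rarr A(G)$ from Proposition \ref{initial} to the equality $\LefS(E\A_{\sce}) = \rlefs(E\A_{\sce}) + [G/G]$. Using the transitive $G$-set expansion
\begin{align*}
 \rlefs(E \A_{\sce}) = \sum_{H \in \sce} \frac{-\reu(\sce_{<H})}{|G:C_{G}(H)|} \, [G/C_{G}(H)] - [G/G]
\end{align*}
from Theorem \ref{leflef}, together with the identity $f_{G}([G/K]) = \ind_{K}^{G}(\one_{K})$ noted after Proposition \ref{initial}, this gives
\begin{align*}
 f_{G}(\rlefs(E \A_{\sce})) + \one_{G} = \sum_{H \in \sce} \frac{-\reu(\sce_{<H})}{|G:C_{G}(H)|} \ind_{C_{G}(H)}^{G}(\one_{C_{G}(H)}) \, .
\end{align*}

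Next, I would show that $f_{G}(\rlefs(E \A_{\sce})) = 0$, which is exactly the point where the centralizer hypothesis on $\sce$ enters. By Theorem \ref{leflef}, the idempotent expansion reads
\begin{align*}
 \rlefs(E \A_{\sce}) = \sum_{\substack{K \in [G \bs \all{G}] \\ C_{G}(K) \notin \sce}} \reu(\sce_{< C_{G}(K)}) \, \eps_{K} \, .
\end{align*}
So it suffices to show that each such $K$ fails to be primordial for $A$; by Proposition \ref{idemp-bye} that kills $f_{G}(\eps_{K})$. But this is precisely the contrapositive of the assumption: since $\sce$ contains the centralizer of every primordial subgroup of $A$, any $K$ with $C_{G}(K) \notin \sce$ lies outside $\prim{A}$, so $f_{G}(\eps_{K}) = 0$.

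Combining these two steps yields the desired identity in $A(G)$. There is no real obstacle here; the entire content lies in having set up Theorem \ref{leflef} and Proposition \ref{idemp-bye}, and in ensuring that the hypothesis on $\sce$ is exactly the condition required to annihilate the idempotent expansion of $\rlefs(E\A_{\sce})$ under $f_{G}$. The only mild subtlety worth spelling out is that the primordial condition is invoked on $K$, whereas the hypothesis is phrased in terms of centralizers of primordials, so the implication to use is the contrapositive.
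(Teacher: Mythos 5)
Your proposal is correct and follows exactly the same route as the paper's proof: apply $f_G$ to the transitive $G$-set expansion of $\LefS(E\A_{\sce})$ from Theorem \ref{leflef}, then kill $f_G(\rlefs(E\A_{\sce}))$ using the idempotent expansion together with Proposition \ref{idemp-bye}, via the contrapositive $C_G(K) \notin \sce \Rightarrow K \notin \prim{A}$. You simply spell out what the paper summarizes as ``analogous to Theorem \ref{EOC-induction}.''
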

\begin{proof}
 The proof is analogous to Theorem \ref{EOC-induction}. Use Theorem \ref{leflef} and observe that if $\sce$ contains the centralizers of subgroups in $\prim{A}$, then the idempotent expansion of the reduced series Lefschetz invariant $\rlefs(E \A_{\sce})$ contains only $\eps_{K}$'s with $C_{G}(K) \notin \sce$, and hence with $K \notin \prim{A}$. Now use Proposition \ref{idemp-bye}.
\end{proof}
Observe that taking take $\sce$ to be exactly the set of centralizers of subgroups in $\prim{A}$, the subgroups we are inducing up are the centralizers of those in $\sce$, hence the \textbf{double centralizers} of subgroups in $\prim{A}$. See Example \ref{work-it} for a worked out example. Because of this double centralizer phenomenon, the induction formula in Theorem \ref{EAC-induction} is not as optimal as the one in Theorem \ref{EOC-induction} in the sense that we might be inducing from bigger subgroups than what is sufficient for $A$. On the other hand, this may result in smaller indices in the denominators and hence a more integral formula. A second issue is that while Theorem \ref{EOC-induction} yields a non-trivial induction formula as long as $G \notin \prim{A}$, the formula in Theorem \ref{EAC-induction} becomes void if $\prim{A}$ contains a subgroup with trivial centralizer.

\subsection{Applications to representations, cohomology, and topology} \label{conseq}
Let $R$ be a unital commutative ring, $G$ a finite group, and $a_{R}(G)$ be the representation ring of finitely generated $RG$-modules. More precisely, the set of isomorphism classes of finitely generated $RG$-modules forms a commutative semiring under direct sum and tensor product, for which $a_{R}(G)$ is the associated Grothendieck ring. The assignment $H \mapsto a_{R}(H)$ defines a $\zz$-Green functor, and hence $A_{R} := \qq \otimes_{\zz} a_{R}$ is a $\qq$-Green functor. The primordial subgroups for a general $R$ was worked out by Dress:

\begin{thm}[{\cite[Theorem $1'$, Theorem 2]{dress-on-integral}}] \label{dress-prim} A subgroup $H \leq G$ is a primordial subgroup of $A_{R}$ if and only if one of the following holds: 
\begin{birki}
 \item $H$ is cyclic. 
 \item There exists a prime $p$ with $pR \neq R$ such that $H /O_{p}(H)$ is cyclic.
\end{birki}
\end{thm}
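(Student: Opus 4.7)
The plan is to prove both directions of the characterization. For the direction that the listed subgroups are primordial, I would argue that cyclic subgroups satisfy this classically: the $R$-rank map $a_{R}(H) \to \zz$ sends $\ind_{K}^{H}(M)$ to $|H:K|\cdot \mathrm{rank}_{R}(M)$, and pairing with the argument that the regular representation of a cyclic group cannot be assembled (even with $\qq$-coefficients) from inductions of proper subgroups pins down cyclic primordiality. For $H$ with $H/O_{p}(H)$ cyclic and $pR\neq R$, the idea is to $p$-localize $R$ and invoke vertex/source theory: the trivial $RH$-module has vertex contained in (in fact equal to) $O_{p}(H)$, and in the $p$-local setting the Green correspondence obstructs $\one_{H}$ from being written as induction from a proper subgroup which fails to contain a conjugate of this vertex. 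Combining the $p$-local and rational components shows such $H$ is genuinely primordial.

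For the converse, the goal is to show that if $H$ is neither cyclic nor $p$-hyperelementary for any prime $p$ with $pR\neq R$, then $\one_{H}$ (and hence every class in $A_{R}(H)$) lies in the $\qq$-span of images of $\ind_{K}^{H}$ for $K<H$. I would proceed by a prime-by-prime reduction of the Green functor $A_{R}$: if $p$ is invertible in $R$, then $p$-local obstructions vanish in $A_{R}(H)$ and Artin's theorem over $\qq$ reduces the trivial module to cyclic subgroups; if $pR\neq R$, then one must reduce $p$-locally to subgroups $K\leq H$ with $K/O_{p}(K)$ cyclic. The latter is Conlon's integral local result. Assembling these via an idempotent decomposition (or equivalently, a local-to-global principle in the sense of Dress's Frobenius functor formalism) yields the surjectivity of $\bigoplus_{K<H}\ind_{K}^{H}$.

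The hard part is the converse direction: in particular, the $p$-local reduction to $p$-hyperelementary subgroups is essentially Conlon--Dress induction and not obtainable from formal manipulation alone. It requires either Brauer-style character machinery over the residue field or an abstract Frobenius-functor argument in the vein of \cite{dress-on-integral}. For that reason I would not reprove it from scratch but invoke the core of Dress's theorem as a black box and derive the characterization of $\prim{A_{R}}$ from it. The "if" direction, by contrast, is within reach by direct vertex-theoretic arguments given the $p$-local structure of $RH$-modules when $pR\neq R$.
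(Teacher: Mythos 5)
The paper itself gives no proof of this statement: Theorem \ref{dress-prim} is cited wholesale from Dress \cite[Theorem $1'$, Theorem 2]{dress-on-integral}, and the article uses it as a black box in the proof of Theorem \ref{new-induction}. So for the ``only if'' direction you correctly identify that there is no shortcut and one must invoke Dress's Frobenius-functor machinery (or the Conlon/Brauer character computations it subsumes); that matches exactly what the paper does.

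For the ``if'' direction, however, your proposed arguments have real gaps. The $R$-rank homomorphism $a_R(H)\to\zz$ cannot detect primordiality once you pass to $\qq$-coefficients: $\frac{1}{|H|}\ind_1^H(\one_1)$ also has rank $1$, so rank alone does not separate $\one_H$ from the $\qq$-span of proper inductions. The standard argument for cyclic $H$ goes through the ring of $\qq$-characters (or the species/mark homomorphisms) and the fact that the regular character is not a proper $\qq$-linear combination of inductions of faithful characters of quotients; it is a finer argument than rank. Similarly, the vertex/Green-correspondence sketch for the case $H/O_p(H)$ cyclic does not obstruct inductions from proper subgroups $K$ \emph{containing} $O_p(H)$: when $O_p(H)<H$ (which is the generic case here), $O_p(H)$ itself is such a proper subgroup, and vertex considerations alone say nothing about those $K$. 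Moreover, vertex and source arguments concern genuine module summands, whereas the definition of primordiality in $A_R = \qq\otimes a_R$ allows arbitrary $\qq$-linear combinations with signs, so one cannot directly import summand-theoretic obstructions into the Grothendieck group. In Dress's and Conlon's work these difficulties are handled by Brauer-character and species arguments, not by vertex theory, and this is precisely why the paper simply cites Dress rather than reprove either direction.
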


It is now a matter of bringing the threads together to prove the promised Theorem \ref{new-induction}.

\begin{proof}[Proof of \emph{\textbf{Theorem \ref{new-induction}}}] Noting that the multiplicative identity in $A_{R}(H)$ is the trivial representation $R$, apply Theorem \ref{EAC-induction} to the Green functor $A_{R}$, using Theorem \ref{dress-prim}.
\end{proof}

\begin{proof}[Proof of \emph{\textbf{Theorem \ref{new-coho}}}]
The assignment $L \mapsto \Ext^{k}_{RG}(L,M)$ defines a linear map $A_{R}(G) \rarr A_{R}(1)$. Apply this map to the equality in Theorem \ref{new-induction}, using a form of Shapiro's lemma that gives 
\begin{align*}
  \Ext^{k}_{RG}(\ind_{H}^{G}(R),M) \cong \Ext^{k}_{RH}(R, \res_{H}^{G}(M)) = \co^{k}(H;M) \, 
\end{align*}
for any subgroup $H \leq G$. We can use $\Tor$ to get a similar formula in group homology, and use Tate Ext groups for Tate cohomology.
\end{proof}

\begin{que}
Is it possible to avoid using Dress's result to prove Theorem \ref{webb-formula} and Theorem \ref{new-induction} by working directly with the chain complexes of $RG$-modules associated to $E\oo_{\sce}$ and $E\A_{\sce}$? What we are lacking here is a chain-level reason for the divergent alternating sum of modules in an \textbf{unbounded} (from one side) chain complex to vanish. On the other hand, there is an obvious condition for \textbf{bounded} chain complexes: a chain homotopy equivalence with the zero complex (see \cite[Proposition 0.3]{brown-coho-book}). This is not enough for the infinite case, as can be seen from 
\begin{align*}
 \cdots \rarr R \rarr R \rarr R \rarr R \rarr 0
\end{align*}
where the maps alternate between the identity and zero maps. The divergent alternating sum would yield $\frac{1}{2}R$ here, not zero.
\end{que}

\begin{proof}[Proof of \emph{\textbf{Theorem \ref{bg-split-cent}}}]
With $R = \compl{\zz}$, the equality in the statement of Theorem \ref{new-induction} (after clearing the denominators etc.) can be written as an isomorphism $\compl{\zz}S \cong \compl{\zz}T$ of permutation $\compl{\zz}G$-modules for certain $G$-sets $S,T$, noting $\ind_{K}^{G}(\compl{\zz}) = \compl{\zz}[G/K]$. We then also get $\ff_{p}S \cong \ff_{p}T$ by mod-$p$ reduction. Minami shows \cite[Lemma 6.8]{minami-splitting} that then for any free $G$-space $X$ we have an equivalence
\begin{align*}
 \complet{\Sigma^{\infty}X \times_{G} S} \simeq \complet{\Sigma^{\infty}X \times_{G} T}
\end{align*}
of spectra. This can be turned back into a fractional expression, namely
\begin{align*}
 \complet{\Sigma^{\infty}X/G} \simeq \bigvee_{H \in \sce} \frac{-\mu_{\sce_{-}}(-\infty,H)}{|G:C_{G}(H)|} \complet{\Sigma^{\infty}X/C_{G}(H)} \, ,
\end{align*}
noting that $X \times_{G} G/K \simeq X/K$. Taking $X = \text{E}G$ yields the desired result.
\end{proof}

\subsection{Canonicity and non-canonicity of induction formulae} \label{canon} A natural question with an explicit induction formula is whether it is compatible with the restriction maps. Let us expand on what this means: using an explicit induction formula for $A$, we get an expression of the form 
\begin{align*}
 \one_{G} = \sum_{H \leq G} \lambda_{H} \ind_{H}^{G}(\one_{H}) \in A(G)\, .
\end{align*}
Given a subgroup $K \leq G$, if we apply $\res_{K}^{G} \colon A(G) \rarr A(K)$ to both sides, we get 
\begin{align*}
 \one_{K} &= \sum_{H \leq G} \lambda_{H} \res_{K}^{G}(\ind_{H}^{G}(\one_{H})) = \sum_{H \leq G} \lambda_{H} \cdot \sum_{g \in [K/G \bs H]} \ind_{K \cap {^{g}H}}^{K} (\one_{K \cap {^{g} H}})
\end{align*}
by the Mackey axiom \cite[1.5]{thev-remark}. Collecting like terms, we would get an expression 
\begin{align*}
 \one_{K} = \sum_{L \leq K} \gamma_{L} \ind_{L}^{K}(\one_{L}) \in A(K) \, .
\end{align*}
On the other hand, $A$ restricted to the subgroups of $K$ is a perfectly valid Green functor for the group $K$. Let us write $A|_{K}$ for this Green functor. Now we could apply the induction formula at hand directly to $A|_{K}$ and get another expression for $\one_{K}$ like above. The question is, would the coefficients that appear here agree with the $\gamma_{L}$'s above? Boltje carried out a detailed analysis of such restriction-respecting formulae (which we shall call \textbf{canonical}, following him) in great generality; see \cite{boltje-habil} and \cite{boltje-ja}. A consequence of his analysis  for a Green functor $A$ as defined in the beginning of Section \ref{green-section} is the following: not only the induction formula in Theorem \ref{EOC-induction} with $\sce = \prim{A}$ is canonical, but also it is minimal in a precise sense among all other canonical induction formulae for $A$; see \cite[Example 2.8]{boltje-habil}. 

We point out an elementary way of seeing the canonicity when $\sce$ in Theorem \ref{EOC-induction} is closed under taking subgroups. 

\begin{prop} \label{respect}
 Let $\sce$ be a set of subgroups of $G$ that is closed under conjugation \textbf{and} taking subgroups. For every subgroup $K \leq G$, write $\sce(K):= \{H \leq K: H \in \sce\}$, so we have elements $\LefS(E \oo_{\sce(K)}) \in \Omega(K)$. Let $T$ be an indeterminate. The evaluation maps $\{s_{K} : K \leq G\}$ out of the polynomial algebra $\qq[T]$ defined by
\begin{align*}
 s_{K} \colon \qq[T] &\mapsto \Omega(K) \\
 T &\mapsto \LefS(E \oo_{\sce(K)})
\end{align*}
are compatible with restriction and conjugation maps on the Burnside ring. That is, $\res_{H}^{K} \circ s_{K} = s_{H}$ whenever $H \leq K$ and $c_{g} \circ s_{H} = s_{gHg^{-1}}$ for every $g \in G$.
\end{prop}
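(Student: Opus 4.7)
My plan is to reduce the compatibility statements to checking two element-level identities in Burnside rings. Since $\qq[T]$ is the free commutative $\qq$-algebra on $T$ and all of $s_{K}$, $\res_{H}^{K}$, $c_{g}$ are $\qq$-algebra homomorphisms, the claimed equalities reduce to
\begin{align*}
\res_{H}^{K}\bigl(\LefS(E\oo_{\sce(K)})\bigr) &= \LefS(E\oo_{\sce(H)}) \quad (H \leq K), \\
c_{g}\bigl(\LefS(E\oo_{\sce(H)})\bigr) &= \LefS(E\oo_{\sce(gHg^{-1})}) \quad (g \in G).
\end{align*}
I will verify each by applying the fixed-point mark homomorphisms $m_{J}(X) = |X^{J}|$, ranging $J$ over subgroups of the codomain; these collectively embed the rational Burnside ring into a product of copies of $\qq$, so it suffices to check that all marks agree.

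For the restriction identity with $J \leq H$, marks are intrinsic to $J$, so $m_{J} \circ \res_{H}^{K} = m_{J}$. Combining Proposition \ref{lef-eu} with Remark \ref{fix}, the EI property of the relevant fixed-point categories (Example \ref{ex-ei}), Corollary \ref{series-ok}, and the invariance of Leinster's $\chi$ under equivalence of categories, I obtain
\begin{align*}
m_{J}\bigl(\res_{H}^{K}\LefS(E\oo_{\sce(K)})\bigr) = \chi(\sce(K)_{\geq J}), \quad m_{J}\bigl(\LefS(E\oo_{\sce(H)})\bigr) = \chi(\sce(H)_{\geq J}).
\end{align*}
The crucial use of closure under subgroups is the observation that both values equal the indicator $[J \in \sce]$. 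Indeed, if $J \notin \sce$, downward closure of $\sce$ forces no supergroup of $J$ to lie in $\sce$ either, so both posets are empty with $\chi = 0$; if $J \in \sce$, then $J$ is the minimum element of both posets, rendering them contractible with $\chi = 1$. Either way the two marks coincide.

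For the conjugation identity, the same marks strategy applies, now using $m_{J}(c_{g}X) = m_{g^{-1}Jg}(X)$ for $J \leq gHg^{-1}$. This reduces the claim to the equality $\chi(\sce(H)_{\geq g^{-1}Jg}) = \chi(\sce(gHg^{-1})_{\geq J})$, which holds because conjugation by $g$ provides an order-preserving bijection between the two posets, invoking closure of $\sce$ under conjugation. The hardest step is really conceptual rather than computational, namely recognizing that closure of $\sce$ under subgroups renders the function $K \mapsto \chi(\sce(K)_{\geq J})$ constant in $K$; once that is observed, the remaining work is a routine chase through Proposition \ref{lef-eu}, Remark \ref{fix}, Corollary \ref{series-ok}, and Example \ref{ex-ei}.
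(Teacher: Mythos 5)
Your proof is correct, and it takes a noticeably different route from the paper's in how it sets up the reduction, though the two arguments ultimately converge on the same combinatorial fact. The paper's proof leans on Proposition \ref{lef-respect} (compatibility of $\LefS$ with $\Res_H^K$ and $\mathbf{c}_g$ on $G$-categories) to reduce everything to the single claim $\LefS(E\oo_{\sce(K)}) = \LefS(\Res_K^G(E\oo_{\sce})) \in \Omega(K)$, a comparison of two $K$-categories built from the ambient $G$-data; only afterwards does it apply Proposition \ref{lef-eu} and Remark \ref{fix}. You instead bypass Proposition \ref{lef-respect} entirely: you check the identities $\res_H^K(\LefS(E\oo_{\sce(K)})) = \LefS(E\oo_{\sce(H)})$ and $c_g(\LefS(E\oo_{\sce(H)})) = \LefS(E\oo_{\sce(gHg^{-1})})$ directly by applying the mark homomorphisms $m_J$ (using that marks detect elements of $\Omega(H)$ and that $m_J\circ\res_H^K = m_J$ for $J\leq H$), then invoke Proposition \ref{lef-eu}, Remark \ref{fix}, Example \ref{ex-ei}, and Corollary \ref{series-ok} to convert each mark into an Euler characteristic of a poset of supergroups. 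Both proofs then finish with the same observation: downward closure of $\sce$ makes $\chi(\sce(K)_{\geq J})$ equal to the indicator $[J\in\sce]$, independent of the enveloping group, so the marks agree. Your version is a bit more self-contained (it does not rely on the unproved-in-the-text Proposition \ref{lef-respect}, which the paper only asserts as ``evident''), while the paper's phrasing keeps the argument at the level of $G$-categories a step longer, which is arguably more conceptual; you also handle the conjugation compatibility explicitly in parallel, where the paper leaves it to the analogous reasoning.
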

\begin{proof}
By Proposition \ref{lef-respect} it is enough to show 
\begin{align*}
 \LefS(E \oo_{\sce(K)}) = \LefS(\Res_{K}^{G}(E \oo_{\sce})) \in \Omega(K)
\end{align*}
for every subgroup $K$. And to see the $K$-categories $E \oo_{\sce(K)}$ and $\Res_{K}^{G}(E \oo_{\sce})$ have the same series Lefschetz invariants in $\Omega(K)$, by Proposition \ref{lef-eu} it is enough to show 
$
 \eus(E \oo_{\sce(K)}^{H})  = \eus(E \oo_{\sce}^{H})
$ for every $H \leq K$. By Remark \ref{fix}, this amounts to checking $\chi(\sce(K)_{\geq H}) = \chi(\sce_{\geq H})$. Now if $H \in \sce$, both $\sce(K)_{\geq H}$ and $\sce_{\geq H}$ have a unique minimal element, namely $H$. And if $H \notin \sce$, we have $\sce(K)_{\geq H} = \sce_{\geq H} = \empt$ because $\sce$ is assumed to be closed under taking subgroups.
\end{proof}
The canonicity of Theorem \ref{EOC-induction} with taking $\sce$ to be the subgroup-closure of $\prim{A}$, which is the so-called \textbf{defect base} of $A$, follows immediately because $\prim{A|_{K}} = \prim{A}(K)$ by \cite[Proposition 2.3]{thev-remark}. In several applications $\prim{A}$ is already subgroup-closed.
\begin{rem}
 In the proof of Proposition \ref{respect}, we see that when $\sce$ is closed under taking subgroups, $E\oo_{\sce}^{H}$ is 
\begin{birki}
 \item contractible if $H \in \sce$, and
 \item empty if $H \notin \sce$.
\end{birki}
These conditions imply that $E \oo_{\sce}$ is a model for the \textbf{classifying space for $\sce$} \cite[Definition 1.8, Theorem 1.9]{luck-classifying}. Its series Lefschetz invariant reflects this with its multiplicative property, for
\begin{align*}
 \LefS(E \oo_{\sce}) = \sum_{H \in [G \bs \sce]} \eps_{H} =: \eps_{\sce}
\end{align*}
(use Proposition \ref{lef-eu}) is exactly the idempotent associated to $\sce$ in the Burnside ring $\Omega(G)$.

We also see that for any subgroup $K \leq G$, not only $\res_{K}^{G}(E \oo_{\sce})$ and $E \oo_{\sce(K)}$ have the same series Lefschetz invariant in $B(K)$ as shown in Proposition \ref{respect}, but also the same $K$-homotopy type. 
\end{rem}
Unlike the subgroup decomposition category, the formula coming from the centralizer decomposition category $E \A_{\sce}$ in Theorem \ref{EAC-induction} is \textbf{not} canonical, at least when $\sce$ is minimally chosen as the set of centralizers of the primordial subgroups. We illustrate this in an example:
\begin{ex} \label{work-it}
  Let $G = S_{4}$ and consider the Green functor
\begin{align*}
  A_{\cc} \colon H \mapsto \qq \otimes_{\zz}  \{\text{ring of complex $H$-characters}\}  \, .
\end{align*}
Then $\prim{A_{\cc}}$ is the set of cyclic subgroups of $G$: the forward inclusion here is Artin's induction theorem. To exhaust $\prim{A_{\cc}}$ up to $G$-conjugacy, set $C_{2}' := \gen{(12)}$, $C_{2}'' := \gen{(12)(34)}$, $C_{3} := \gen{(123)}$, $C_{4} := \gen{(1234)}$, and $1$ to be the trivial subgroup. Here $C_{3}$ and $C_{4}$ are self-centralizing, whereas $V_{4}' := C_{G}(C_{2}') = \gen{(12),(34)}$ and $D_{8} := C_{G}(C_{2}'') = \gen{(12),(1324)}$ and of course $G = C_{G}(1)$. Now Theorem \ref{EAC-induction} applies to the union of the $G$-conjugacy classes
$
 \sce := [C_{3}] \cup [C_{4}] \cup [V_{4}'] \cup [D_{8}] \cup [G]
$.
In other words, $\sce$ is the set of centralizers of cyclic subgroups of $G$. Below is a picture of the poset $\sce_{-} = \sce \sqcup \{-\infty\}$ :
\begin{center}
\begin{tikzpicture}[node distance =2cm]
	\node(G) {$G$};
	\node(D8) [below left=0.7cm and 0.8cm of G] {$^{3\! \times}\!D_{8}$};
	\node(V4) [below left = 2cm and 2.5cm of G] {$^{3\! \times}\! V_{4}'$};
	\node(C4) [below = 2cm of G] {$^{3\! \times}\!C_{4}$};
	\node(C3) [below right = 2.0cm and 1cm of G] {$^{4\! \times}\! C_{3}$};
	\node(1) [below = 4cm of G] {$-\infty$};
	\draw (G)--(D8);
	\draw (G)--(C3);
	\draw (D8) -- node[near start, above]{$\scriptscriptstyle{1}$} node[very near end, above]{$\scriptscriptstyle{1}$} (V4);
	\draw (D8)-- node[near start, above]{$\scriptscriptstyle{1}$} node [very near end, above]{$\scriptscriptstyle{1}$} (C4);
	\draw(1)--(V4);
	\draw(1)--(C4);
	\draw(1)--(C3);
	\draw[blue] node[thick, on grid, above left = 2mm and 7 mm of G, circle,draw]{$\scriptstyle{+6}$};
	\draw[blue] node[thick, on grid, above left = 3mm and 8.5mm of D8, circle,draw]{$\scriptstyle{+1}$};
	\draw[blue] node[thick, on grid, above left = 2mm and 7.8 mm of V4, circle,draw]{$\scriptstyle{-1}$};
	\draw[blue] node[thick, on grid, left = 9mm of C4, circle, draw]{$\scriptstyle{-1}$};
	\draw[blue] node[thick, on grid, right = 9mm of C3, circle, draw]{$\scriptstyle{-1}$};
	\draw[blue] node[thick, on grid, right = 9mm of 1, circle, draw]{$\scriptstyle{+1}$};
\end{tikzpicture}
\end{center}
Here, the notation ${^{3 \! \times} \! D_{8}}$ means that $D_{8}$ has $3$ conjugates in $G$. The edge connecting $D_{8}$ to $V_{4}'$ having two $1$'s  means that each conjugate of $V_{4}'$ is contained in exactly 1 conjugate of $D_{8}$ in $G$, etc. The numbers in circles record the \mob function values $\mu_{\sce_{-}}(-\infty, H) = \reu(\sce_{<H})$ for $H \in \sce$. The second round of centralizers go $C_{G}(G) = 1, C_{G}(D_{8}) =C_{2}''$, and $V_{4}', C_{3}$,$C_{4}$ are self-centralizing. Writing $\cc[G/H] = \ind_{H}^{G}(\one_{H}) \in A_{\cc}(G) $ for the complex permutation representation of the $G$-set $G/H$, Theorem \ref{EAC-induction} yields
\begin{align*}
\cc[G/G] = \frac{-6}{24} \cc[G/1] + 3 \cdot \frac{-1}{12} \cc[G/C_{2}''] + 3 \cdot \frac{1}{6} \cc[G/V_{4}'] + 3 \cdot \frac{1}{6} \cc[G/C_{4}] + 4 \cdot \frac{1}{8} \cc[G/C_{3}] \, ,
\end{align*}
which may also be verified by checking the character values. Now, restricting to the alternating group $A_{4}$ and applying the Mackey double coset formula several times, the above formula for $\cc[G/G]$ restricts to
\begin{align*}
 \cc[A_{4}/A_{4}] = \frac{-1}{2} \cc[A_{4}/1] + \frac{1}{2}\cc[A_{4}/C_{2}''] + \cc[A_{4}/C_{3}] \in A_{\cc}(A_{4}) \, .
\end{align*}
To compare, let us apply Theorem \ref{EAC-induction} directly to $A_{4}$ and centralizers of the cyclic subgroups of $A_{4}$. Up to $A_{4}$-conjugacy, $1$,$C_{3}$ and $C_{2}''$ are the only cyclic subgroups in $A_{4}$. $C_{3}$ is self centralizing in $A_{4}$, whereas $V_{4}'' := C_{A_{4}}(C_{2}'') = \{(), (12)(34), (13)(24), (14)(23)\}$ and of course $C_{A_{4}}(1) = A_{4}$. Taking $\sce'$ to be the $A_{4}$-conjugates of $C_{3}$ and $V_{4}''$, the poset $\sce'_{-} = \sce' \sqcup \{-\infty\}$ looks like:
\begin{center}
\begin{tikzpicture}[node distance =2cm]
	\node(G) {$A_{4}$};
	\node(V4) [below left=0.7cm and 0.8cm of G] {$^{1\! \times}\!V_{4}''$};
	\node(C3) [below right = 0.7cm and 0.8cm of G] {$^{4\! \times}\! C_{3}$};
	\node(1) [below = 2cm of G] {$-\infty$};
	\draw (G)--(V4);
	\draw (G)--(C3);
	\draw(1)--(V4);
	\draw(1)--(C3);
	\draw[blue] node[thick, on grid, above left = 2mm and 7 mm of G, circle,draw]{$\scriptstyle{+4}$};
	\draw[blue] node[thick, on grid, above left = 3mm and 8.5mm of V4, circle,draw]{$\scriptstyle{-1}$};
	\draw[blue] node[thick, on grid, above right = 3mm and 8mm of C3, circle, draw]{$\scriptstyle{-1}$};
	\draw[blue] node[thick, on grid, below right = 2mm and 8mm of 1, circle, draw]{$\scriptstyle{+1}$};
\end{tikzpicture}
\end{center}
Noting that both $V_{4}''$ and $C_{3}$ are self-centralizing in $A_{4}$ and $C_{A_{4}}(A_{4}) = 1$, Theorem \ref{EAC-induction} applied to $A_{4}$ and $\sce'$ yields 
\begin{align*}
 \cc[A_{4}/A_{4}] &= \frac{-4}{12} \cc[A_{4}/1] + \frac{1}{3} \cc[A_{4}/V_{4}''] + 4 \cdot \frac{1}{4} \cc[A_{4}/C_{3}] \, ,
\end{align*}
a different formula than what we obtained above by restricting the formula for $G=S_{4}$.
\end{ex}
  
\bibliographystyle{amsalpha}
\bibliography{ulan}

\end{document}